\documentclass[11pt]{article}
\usepackage[a4paper,margin=1in]{geometry}
\usepackage{amsmath,amssymb,amsthm,mathtools}
\usepackage{hyperref}
\hypersetup{
  pdfborder={0 0 1.2},
  pdflinkmargin=1pt
}

\newtheorem{theorem}{Theorem}[section]
\newtheorem{proposition}[theorem]{Proposition}
\newtheorem{lemma}[theorem]{Lemma}
\newtheorem{corollary}[theorem]{Corollary}
\newtheorem{remark}[theorem]{Remark}
\numberwithin{equation}{section}

\newcommand{\R}{\mathbb{R}}
\newcommand{\trlessA}{\mathring{A}}

\title{Scale-Separated Curvature Estimates for Strongly Stable Hypersurfaces}

\author{Anji Tang}

\date{}

\begin{document}

\maketitle

\begin{abstract}
The Schoen--Simon--Yau integral curvature estimate for stable minimal
hypersurfaces is usually closed by Young's inequality.  We give an alternative
proof based on a single H\"older factorization.  The choices
\(f\mapsto f^{1+q}\) in the gradient estimate and
\(\phi=u^{1+q}f^{1+q}\) in the stability inequality yield an explicit
\(L^{4+2q}\)-estimate without negative powers of the cutoff.  For strongly
stable constant-mean-curvature hypersurfaces, with
\(u=|\mathring A|\), the same factorization gives an estimate in which the
cutoff scale and the mean-curvature scale remain separate.  In the
supercritical range \(2+q>n/2\), the Michael--Simon Sobolev inequality and
Moser iteration turn this integral estimate into a quantitative pointwise
bound.  Its small-energy form is an \(\varepsilon\)-regularity statement.
\end{abstract}

\medskip
\noindent\textbf{Keywords.}
Stable minimal hypersurface; strongly stable CMC hypersurface;
curvature estimate; H\"older inequality; Moser iteration.

\section{Introduction}
\label{sec:intro}

\subsection*{Background}

\qquad Curvature estimates for stable hypersurfaces combine a Simons-type
differential inequality with stability.  In the minimal case, Schoen, Simon
and Yau~\cite{SSY75} used this combination to estimate powers of \(u=|A|\).
Together with the area-growth estimate of Massari--Miranda~\cite{MM84}, their
estimate gives the Bernstein theorem for complete stable minimal graphs in
dimensions \(n\le5\).  The last step in the calculation absorbs a mixed
cutoff term.

We revisit this closing step.  Replace \(f\) by \(f^{1+q}\) in the gradient
estimate and use \(\phi=u^{1+q}f^{1+q}\) in the stability inequality.  The
mixed integrand then has the factorization
\[
u^{2+2q}f^{2q}|\nabla f|^2
=
\bigl(u^{4+2q}f^{2+2q}\bigr)^{q/(1+q)}
\bigl(u^2|\nabla f|^{2+2q}\bigr)^{1/(1+q)}.
\]
H\"older's inequality returns a power of the curvature integral on the left,
which can then be divided out.  The resulting minimal estimate is known; it
is contained, for example, in \cite[Theorem~5.5]{INS12}.  The argument here
gives a short alternative proof, keeps all cutoff powers nonnegative, and
tracks the constant explicitly.

For a CMC hypersurface, the cutoff radius and the mean curvature define
different scales.  We work with \(u=|\mathring A|\), where
\(\mathring A=A-Hg\), and assume strong stability, that is, stability under
all compactly supported variations.  This is stronger than the
volume-preserving stability in the classical theory of
Barbosa--do Carmo~\cite{BdC84}.  CMC curvature estimates under related
stability hypotheses appear in Lu~\cite{Lu15},
Ilias--Nelli--Soret~\cite{INS12}, and
Bellettini--Chodosh--Wickramasekera~\cite{BCW19}.  The form proved here keeps
the cutoff term and the \(H\)-term separate.  It therefore records both the
minimal limit and the scale at which the CMC term enters.

\subsection*{Main results and contributions}

We first record the known minimal estimate in the form used later.  The
constant is explicit, and no negative power of the cutoff occurs.  In
particular, the estimate applies to Lipschitz distance cutoffs by approximation.

\begin{theorem}
\label{thm:2.2}
Let \(M^n\subset\R^{n+1}\) be a two-sided stable minimal hypersurface, set
\(u=|A|\), and let \(0<q<\sqrt{2/n}\).  For every
\(f\in C_c^\infty(M)\) with \(0\le f\le1\),
\begin{equation}\label{eq:2.14}
\int_M u^{4+2q}f^{2+2q}\,dv_M
\le
C_H(n,q)\int_M u^2|\nabla f|^{2+2q}\,dv_M,
\end{equation}
where, with \(A(n,q)=2/n-q^2\),
\begin{equation}\label{eq:2.15}
C_H(n,q)
=\Bigl(
2(1+q)^2\Bigl(
1+\frac{2(1+q)^2}{A(n,q)}
+\frac{8(1+q)^2(q^2+2q+2)}{A(n,q)^2}
\Bigr)
\Bigr)^{1+q}.
\end{equation}
\end{theorem}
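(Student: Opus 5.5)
The plan is to run the standard Schoen--Simon--Yau argument: combine Simons' inequality with stability, and close with the Hölder factorization from the introduction instead of Young's inequality.

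\textbf{Step 1 (gradient estimate).} For minimal hypersurfaces, Simons' identity together with the improved Kato inequality \(|\nabla A|^2\ge(1+2/n)|\nabla u|^2\) gives, on \(\{u>0\}\),
\[
u\Delta u+u^4\ge \tfrac{2}{n}|\nabla u|^2 .
\]
Multiply by \(u^{2q}\phi^2\) with \(\phi=f^{1+q}\) and integrate by parts. Collecting the \(|\nabla u|^2\) terms produces the coefficient \(\tfrac2n+1+2q\) on the left. The cross term \(2u^{1+2q}\phi\langle\nabla u,\nabla\phi\rangle\) is absorbed by Cauchy--Schwarz with a parameter \(\varepsilon\). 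The result is
\[
\Bigl(\tfrac2n+2q-\varepsilon\Bigr)\int u^{2q}|\nabla u|^2\phi^2
\le \int u^{4+2q}\phi^2+\tfrac1\varepsilon\int u^{2+2q}|\nabla\phi|^2 .
\]
Regularity at \(\{u=0\}\) is handled by replacing \(u\) with \((u^2+\delta)^{1/2}\) and letting \(\delta\to0\).

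\textbf{Step 2 (stability).} Insert \(u^{1+q}\phi\) into the stability inequality \(\int u^2\psi^2\le\int|\nabla\psi|^2\). This gives
\[
\int u^{4+2q}\phi^2\le(1+q)^2(1+\eta)\int u^{2q}|\nabla u|^2\phi^2+(1+\eta^{-1})\int u^{2+2q}|\nabla\phi|^2 .
\]
Substitute Step 1 into this. The coefficient of \(\int u^{4+2q}\phi^2\) that must be absorbed is \((1+q)^2(1+\eta)/(\tfrac2n+2q-\varepsilon)\). It is strictly less than \(1\) once \(\varepsilon,\eta\) are small, exactly when \((1+q)^2<\tfrac2n+2q+\dots\), i.e.\ when \(1+q^2<\ldots\). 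In the SSY form this is the condition \(q^2<2/n\), which produces \(A(n,q)=2/n-q^2\). One then chooses \(\varepsilon,\eta\) as explicit multiples of \(A(n,q)\). This yields
\[
\int u^{4+2q}f^{2+2q}\le K\int u^{2+2q}f^{2q}|\nabla f|^2 ,
\]
using \(|\nabla\phi|^2=(1+q)^2f^{2q}|\nabla f|^2\). The constant \(K\) should equal the base of \eqref{eq:2.15}, namely \(2(1+q)^2(1+\dots)\).

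\textbf{Step 3 (Hölder closing).} Apply Hölder's inequality with exponents \((1+q)/q\) and \(1+q\) to the factorization from the introduction. This bounds the right side by
\[
K\Bigl(\int u^{4+2q}f^{2+2q}\Bigr)^{q/(1+q)}\Bigl(\int u^2|\nabla f|^{2+2q}\Bigr)^{1/(1+q)} .
\]
If the left side \(I\) is zero there is nothing to prove. Otherwise \(I<\infty\) by compact support, so we divide by \(I^{q/(1+q)}\) and raise to the power \(1+q\). This gives \eqref{eq:2.14} with \(C_H=K^{1+q}\).

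\textbf{Main obstacle.} The delicate step is the bookkeeping in Step 2. The parameters \(\varepsilon,\eta\) must be chosen so that the absorbed coefficient is bounded away from \(1\) by an amount proportional to \(A(n,q)\). This is the step that produces the specific denominators \(A\) and \(A^2\) and the factor \(q^2+2q+2\) in \eqref{eq:2.15}. My first attempt would be \(\varepsilon\) proportional to \(A\) and \(1+\eta\) chosen so that half of the gap \(A\) remains. I would then verify that the resulting constant is dominated by the stated expression; an upper bound is all that is needed.
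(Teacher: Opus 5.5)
Your argument is correct once one slip is fixed. It reaches the pre-H\"older inequality $\int_M u^{4+2q}f^{2+2q}\,dv_M\le K\int_M u^{2+2q}f^{2q}|\nabla f|^2\,dv_M$ by a different path than the paper.

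\textbf{The slip.} In Step~1 the displayed coefficient must be $\frac2n+2q+1-\varepsilon$, as you say in words. The $+1$ is the $(2q+1)$ produced by integrating $u^{2q+1}\phi^2\Delta u$ by parts. With $\frac2n+2q-\varepsilon$, the absorption in Step~2 would require $1+q^2<2/n$, which never holds. With the $+1$ restored, it requires $\varepsilon+(1+q)^2\eta<\frac2n+2q+1-(1+q)^2=A$. Your choice $\varepsilon=A/4$, $(1+q)^2\eta=A/4$ leaves gap $A/2$ and gives
\[
K=(1+q)^2\Bigl(\frac32+\frac{2+8(1+q)^2}{A}+\frac{8(1+q)^2(q^2+2q+2)}{A^2}\Bigr).
\]

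\textbf{The constant.} This $K$ is not the base of \eqref{eq:2.15}, but it is smaller. Comparing it with $2(1+q)^2\bigl(1+\frac{2(1+q)^2}{A}+\frac{8(1+q)^2(q^2+2q+2)}{A^2}\bigr)$ reduces to
\[
\bigl(2+4(1+q)^2\bigr)A\le \tfrac{A^2}{2}+8(1+q)^2(q^2+2q+2).
\]
This holds because $A<2$ and $(1+q)^2\ge1$. Hence $K^{1+q}\le C_H(n,q)$, and your Step~3 is exactly the paper's H\"older closure.

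\textbf{Comparison of routes.} The paper reaches the same inequality in two stages. Proposition~\ref{prop:2.1} removes the curvature term using stability with test function $u^{1+q}f$ and absorbs the gradient term, giving a gradient estimate for arbitrary $f$. That estimate is applied to $f^{1+q}$. Stability is then tested a second time with $u^{1+q}f^{1+q}$, expanded crudely by $(a+b)^2\le 2a^2+2b^2$. The base $2(1+q)^2\bigl((1+q)^2C_1+1\bigr)$ of \eqref{eq:2.15} is the trace of that route.

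Your one-pass version uses stability once, absorbs $\int_M u^{4+2q}\phi^2\,dv_M$ rather than the gradient term, and avoids the factor $2$. It is shorter and gives a better constant. The paper's modular route gives a standalone gradient estimate. It is also the template reused in the CMC case (Propositions~\ref{prop:3.2}--\ref{prop:3.3}), where the second stability test also supplies the $-nH^2\int_M u^{2+2q}f^{2+2q}\,dv_M$ term that lowers the coefficient of the $H$-term.
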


\begin{corollary}\label{cor:bernstein}
Every entire stable minimal graph \(M^n\subset\R^{n+1}\) with \(n\le5\)
is an affine hyperplane~\cite{SSY75}.
\end{corollary}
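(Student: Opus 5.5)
The corollary follows from Theorem~\ref{thm:2.2} combined with the volume-growth estimate for minimal graphs. Let \(M\) be the graph of an entire solution of the minimal surface equation over \(\R^n\). I will first record the standard facts I need. Such a graph is area-minimizing, by the calibration given by the extended unit normal. It is therefore stable, and it is two-sided because it is a graph. Comparison with the boundary of the cylinder over a ball gives Euclidean volume growth, \(\mathrm{vol}(M\cap B_R)\le C(n)R^n\). The estimate of Massari--Miranda~\cite{MM84} is one convenient citation for this.

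Next fix \(R>0\). Let \(f\) be a cutoff with \(f=1\) on \(B_R\), \(f=0\) outside \(B_{2R}\), \(0\le f\le 1\) and \(|\nabla f|\le 2/R\). Since the estimate involves no negative powers of \(f\), Theorem~\ref{thm:2.2} applies to such Lipschitz cutoffs by approximation. It gives
\[
\int_{M\cap B_R}u^{4+2q}\le C_H(n,q)\Bigl(\frac{2}{R}\Bigr)^{2+2q}\int_{M\cap B_{2R}}u^2 .
\]

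The main obstacle is that the right-hand side still contains \(\int u^2\) rather than a volume term. I will handle it with H\"older's inequality:
\[
\int_{B_{2R}}u^2\le\Bigl(\int_{B_{2R}}u^{4+2q}\Bigr)^{\frac{2}{4+2q}}\mathrm{vol}(M\cap B_{2R})^{\frac{2+2q}{4+2q}}.
\]
Set \(p=4+2q\) and \(I(R)=\int_{M\cap B_R}u^{p}\). Combining the two inequalities with the volume bound gives
\[
I(R)\le C\,R^{-(2+2q)}\,I(2R)^{2/p}\,R^{n(2+2q)/p}.
\]

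I also need a crude a priori growth bound on \(I\). To get it, I apply Theorem~\ref{thm:2.2} with a cutoff whose gradient is instead bounded by \(|\nabla f|^{2+2q}\le C f^{?}\)-type conditions. Better, I avoid H\"older altogether and use the cleaner standard route. Replace the cutoff by \(f^{m}\) with \(m\) large, and apply Young's inequality to \(u^2|\nabla f^m|^{p/2}\le \varepsilon u^{p}f^{mp/2}+C_\varepsilon|\nabla f|^{p/2}\). Absorbing the first term gives
\[
\int_{M\cap B_R}u^{4+2q}\le C\int_{M\cap B_{2R}}|\nabla f|^{4+2q}\le C R^{\,n-(4+2q)} .
\]
This absorption needs \(u^{p}f^{mp/2}\lesssim u^pf^{2+2q}\). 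Choosing \(m\) so that the exponents match, \(f^{m(2+2q)}\) on the left, with \(\phi=f^m\) in the theorem, the Young split uses the exponent pair \((p/2,\,p/(p-2))\) and absorbs correctly.

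It remains to choose \(q\). Letting \(R\to\infty\), the integral \(\int_M|A|^{4+2q}\) vanishes provided \(n<4+2q\) for some admissible \(q<\sqrt{2/n}\). For \(n\le4\) any small \(q>0\) works. For \(n=5\) I need \(q>1/2\) and \(q<\sqrt{2/5}\approx0.632\), and such a \(q\) exists. Then \(|A|\equiv0\), so \(M\) is totally geodesic and hence an affine hyperplane.
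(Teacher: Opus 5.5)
Your route differs from the paper's, and it works once the Young step is fixed; as displayed, that step is false.

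\textbf{Comparison.} The paper never converts Theorem~\ref{thm:2.2} into an $L^{p}$ estimate. It applies the theorem with the distance cutoff and keeps the factor $\int_{B_r}u^2$. It then tests the stability inequality~\eqref{eq:2.1} a second time, with a plain cutoff $\eta$. Together with the volume bound this gives $\int_{B_r}u^2\le\int_M|\nabla\eta|^2\le Cr^{n-2}$, hence $\int_{B_{\theta r}}u^{4+2q}\le Cr^{n-4-2q}$.

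You never bound $\int u^2$. You close a second time and recover the classical SSY form $\int u^{p}f^{m(2+2q)}\le C\int|\nabla f|^{p}$, with $p=4+2q$, which then needs only volume growth. Both routes give the same decay $r^{n-4-2q}$ and the same range $n\le5$. The paper uses Theorem~\ref{thm:2.2} exactly as stated plus one elementary use of stability. Yours avoids that second use of stability. In exchange it needs a power $f^m$ of the cutoff and an absorption that relies on the left side being finite, which holds because $f$ has compact support.

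\textbf{The Young step.} As written it is wrong in three places. Theorem~\ref{thm:2.2} involves $|\nabla\phi|^{2+2q}$, not $|\nabla\phi|^{p/2}=|\nabla\phi|^{2+q}$. The remainder must be $|\nabla f|^{p}$, not $|\nabla f|^{p/2}$; with $|\nabla f|^{p/2}$ the inequality fails for large $|\nabla f|$. And the absorption condition is not $u^pf^{mp/2}\lesssim u^pf^{2+2q}$.

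Here is the correct step. With $\phi=f^m$ one has $|\nabla\phi|^{2+2q}=m^{2+2q}f^{(m-1)(2+2q)}|\nabla f|^{2+2q}$. Young's inequality with exponents $p/2$ and $p/(p-2)$ gives
\[
u^2f^{(m-1)(2+2q)}|\nabla f|^{2+2q}\le\varepsilon\,u^{p}f^{(m-1)(2+2q)(2+q)}+C_\varepsilon|\nabla f|^{p}.
\]
Since $0\le f\le1$, the first term is at most $\varepsilon u^pf^{m(2+2q)}$ as soon as $(m-1)(2+q)\ge m$, so $m=2$ already suffices. Choose $\varepsilon$ with $C_Hm^{2+2q}\varepsilon\le\frac12$ and absorb, using that $X=\int_M u^pf^{m(2+2q)}<\infty$. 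This yields
\[
\int_{M\cap B_R}u^p\le X\le C\int_M|\nabla f|^p\le CR^{n-p}.
\]
Equivalently, H\"older with the same exponents gives $X\le C_Hm^{2+2q}X^{2/p}\bigl(\int_M|\nabla f|^p\bigr)^{(p-2)/p}$, which closes exactly as in the paper's proof of Theorem~\ref{thm:2.2}.

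Delete the abandoned H\"older attempt with mismatched radii and the unfinished sentence containing ``$f^{?}$''; the argument does not use them. The rest of your proposal is correct: the graph is area-minimizing and hence stable and two-sided, it has Euclidean volume growth, and the choice of $q$ (any $q>0$ for $n\le4$, $q\in(1/2,\sqrt{2/5})$ for $n=5$) finishes the proof.
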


For strongly stable CMC hypersurfaces, the corresponding estimate retains the
two scales \(\frac{1}{(1-\theta)R}\) and \(|H|\).

\begin{theorem}\label{thm:3.4}
Let \(M^n\subset\R^{n+1}\) be a strongly stable CMC hypersurface, set
\(u=|\mathring A|\), and let \(0<q<\sqrt{2/n}\).  For
\(B_R(p_0)\Subset M\) and \(0<\theta<1\), there are explicit nonnegative
constants \(\widetilde{\mathcal C}_1=\widetilde{\mathcal C}_1(n,q)\) and
\(\widetilde{\mathcal C}_2=\widetilde{\mathcal C}_2(n,q)\) such that
\begin{equation}\label{eq:3.23}
\int_{B_{\theta R}(p_0)}u^{4+2q}\,dv_M
\le
\left(
\frac{\widetilde{\mathcal C}_1}
{(1-\theta)^{2+2q}R^{2+2q}}
+
\widetilde{\mathcal C}_2|H|^{2+2q}
\right)
\int_{B_R(p_0)}u^2\,dv_M.
\end{equation}
\end{theorem}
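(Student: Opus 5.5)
I will run the same Hölder-factorization scheme as in Theorem~\ref{thm:2.2}. The new ingredients are the lower-order terms in the Simons-type inequality for \(u=|\trlessA|\) and the extra potential in the stability operator.

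\textbf{Step 1: Simons inequality.} I will first record the inequality for \(\trlessA\) on a CMC hypersurface in \(\R^{n+1}\). Since \(\trlessA\) is traceless and Codazzi with \(H\) constant, the refined Kato inequality \(|\nabla \trlessA|^2\ge (1+2/n)|\nabla u|^2\) holds. Simons' identity then gives, wherever \(u>0\),
\[
u\Delta u+u^4\ge \tfrac{2}{n}|\nabla u|^2-c_1(n)|H|u^3-c_2(n)H^2u^2 .
\]
The constants \(c_1,c_2\) come from \(\mathrm{tr}(\trlessA^3)\le \frac{n-2}{\sqrt{n(n-1)}}u^3\) and from \(|A|^2=u^2+nH^2\). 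I intend to normalize \(H\) as in \(\trlessA=A-Hg\).

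\textbf{Step 2: stability.} Strong stability says that for all \(\phi\in C_c^\infty(M)\),
\[
\int|A|^2\phi^2\le\int|\nabla\phi|^2,
\qquad |A|^2=u^2+nH^2\ge u^2 .
\]
So the minimal-case stability inequality holds verbatim with \(u\) in place of \(|A|\). The \(H^2\) term can simply be dropped, since it has the favorable sign.

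\textbf{Step 3: the two estimates.} I will multiply the Simons inequality by \(u^{2q}f^{2+2q}\) and integrate by parts, which is the gradient estimate with \(f\mapsto f^{1+q}\). Combining this with stability for \(\phi=u^{1+q}f^{1+q}\), exactly as in the proof of Theorem~\ref{thm:2.2}, should give
\[
A(n,q)\int u^{2q}|\nabla u|^2f^{2+2q}\lesssim \int u^{2+2q}f^{2q}|\nabla f|^2 +|H|\int u^{3+2q}f^{2+2q}+H^2\int u^{2+2q}f^{2+2q},
\]
and then an estimate of \(\int u^{4+2q}f^{2+2q}\) by the same right-hand side. The condition \(q^2<2/n\) makes \(A(n,q)>0\).

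\textbf{Step 4: closing, the main obstacle.} The first term on the right I will treat by the displayed Hölder factorization. The two \(H\)-terms must be handled so that the scales \(|\nabla f|\) and \(|H|\) stay separate. I would use the analogous factorizations
\[
|H|u^{3+2q}f^{2+2q}=(u^{4+2q}f^{2+2q})^{\frac{1+2q}{2+2q}}\,(|H|^{2+2q}u^2f^{2+2q})^{\frac1{2+2q}},
\]
\[
H^2u^{2+2q}f^{2+2q}=(u^{4+2q}f^{2+2q})^{\frac{q}{1+q}}\,(|H|^{2+2q}u^2f^{2+2q})^{\frac1{1+q}} .
\]
Hölder's inequality then bounds each term by a power strictly less than one of \(X=\int u^{4+2q}f^{2+2q}\), times a power of \(|\nabla f|^{2+2q}\)- or \(|H|^{2+2q}\)-weighted \(L^2\) norms of \(u\). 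Writing \(X\le aX^{\alpha}+bX^{\beta}+cX^{\gamma}\) with all exponents less than one, I will deduce \(X\le C(a^{1/(1-\alpha)}+\dots)\), which is a sum of the two scales. This step carries the real difficulty: \(X\) must be finite so that it can be divided out, which holds since \(f\) is compactly supported and \(M\) is smooth. The exponents must also match so that the outcome is linear in \(\int u^2\).

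\textbf{Step 5: cutoff.} Finally I will take \(f\) to be the Lipschitz cutoff equal to \(1\) on \(B_{\theta R}\), vanishing outside \(B_R\), with \(|\nabla f|\le 1/((1-\theta)R)\). This is admissible by approximation, as remarked after Theorem~\ref{thm:2.2}, and it yields \eqref{eq:3.23}. One caveat: where \(u=0\), \(u\) is not smooth, so I will either work with \(u_\varepsilon=\sqrt{u^2+\varepsilon}\) or use the Kato inequality in distributional form.
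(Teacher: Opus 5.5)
Your argument is correct, but it handles the cubic term differently from the paper.

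\textbf{The paper's route.} The paper never carries $J=\int_M|H|u^{3+2q}f^{2+2q}\,dv_M$ into the closing step. Inside the gradient estimate (Proposition~\ref{prop:3.2}) it splits $\alpha J\le\varepsilon_3X+\frac{\alpha^2}{4\varepsilon_3}H^2K$ by Young's inequality. It then re-absorbs $\varepsilon_3X$ through the stability bound for $X$ in terms of $I$, which is why $\varepsilon_3=\delta$ and $\varepsilon_2=A/(4(1+\delta)(1+q)^2)$ have to be tuned. This gives the two-term bound $X\le aF+bH^2K$. Both terms share the H\"older exponent $q/(1+q)$, so $X^{q/(1+q)}$ is divided out at once, and $(s+t)^{1+q}\le 2^q(s^{1+q}+t^{1+q})$ finishes.

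\textbf{Your route.} You keep $J$ and give it its own factorization, with conjugate exponents $\frac{2+2q}{1+2q}$ and $2+2q$. The exponents check out. Since $1/(1-\beta)=2+2q$ for $\beta=\frac{1+2q}{2+2q}$, the closing step ($X\le\sum_i a_iX^{\gamma_i}$ with $\gamma_i<1$ and $X<\infty$ implies $X\le\sum_i(3a_i)^{1/(1-\gamma_i)}$) returns a multiple of $|H|^{2+2q}\int_M u^2f^{2+2q}\,dv_M$. That is linear in $\int u^2$, as required. Your route lets homogeneity do the scale separation and avoids balancing the $\varepsilon$'s against $A(n,q)$. The price is a three-term closing with unequal exponents.

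\textbf{What each buys.} The paper also keeps the favorable terms $+nH^2u^2$ in the Simons inequality and $nH^2\phi^2$ in the stability inequality; you discard both. Keeping them is what allows $B_0$, and with it $\widetilde{\mathcal C}_2$, to vanish in some ranges. Examples are $n=2$, and by \eqref{eq:3.9} also $n=3$ with $q$ small; this is the $\widetilde{\mathcal C}_2=0$ case of Corollary~\ref{prop:3.5}. Your version, as written, always gives a positive $|H|^{2+2q}$ coefficient when $n\ge3$. The theorem only asks for explicit nonnegative constants, so both arguments prove it.
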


\begin{corollary}
\label{prop:3.5}
Under the assumptions of Theorem~\ref{thm:3.4}, let \(\lambda>0\).  If
\(\widetilde{\mathcal C}_2>0\) and
\[
|H|(1-\theta)R
\le
\left(
\lambda\frac{\widetilde{\mathcal C}_1}
{\widetilde{\mathcal C}_2}
\right)^{1/(2+2q)},
\]
then
\begin{equation}\label{eq:3.25}
\int_{B_{\theta R}(p_0)}u^{4+2q}\,dv_M
\le
(1+\lambda)
\frac{\widetilde{\mathcal C}_1}
{(1-\theta)^{2+2q}R^{2+2q}}
\int_{B_R(p_0)}u^2\,dv_M.
\end{equation}
If \(\widetilde{\mathcal C}_2=0\), the same conclusion holds without a
restriction on \(H\), \(R\), or \(\theta\).
\end{corollary}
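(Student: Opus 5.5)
This is a direct consequence of Theorem~\ref{thm:3.4}; the plan is to show that, under the stated smallness hypothesis, the \(|H|\)-term on the right of \eqref{eq:3.23} is at most \(\lambda\) times the cutoff term.

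\textbf{Case \(\widetilde{\mathcal C}_2>0\).}
\begin{enumerate}
\item Both sides of the hypothesis are nonnegative, since \(\widetilde{\mathcal C}_1\ge0\), \(\lambda>0\) and \(\widetilde{\mathcal C}_2>0\). Raise it to the power \(2+2q>0\), using monotonicity of \(t\mapsto t^{2+2q}\) on \([0,\infty)\):
\[
|H|^{2+2q}(1-\theta)^{2+2q}R^{2+2q}\le \lambda\,\frac{\widetilde{\mathcal C}_1}{\widetilde{\mathcal C}_2}.
\]
\item Multiply by \(\widetilde{\mathcal C}_2\) and divide by \((1-\theta)^{2+2q}R^{2+2q}>0\):
\[
\widetilde{\mathcal C}_2|H|^{2+2q}\le \lambda\,\frac{\widetilde{\mathcal C}_1}{(1-\theta)^{2+2q}R^{2+2q}}.
\]
\item Insert this into the bracket of \eqref{eq:3.23}. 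The bracket is then at most \((1+\lambda)\widetilde{\mathcal C}_1/((1-\theta)^{2+2q}R^{2+2q})\). Since \(\int_{B_R(p_0)}u^2\,dv_M\ge0\), multiplying gives \eqref{eq:3.25}.
\end{enumerate}

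\textbf{Case \(\widetilde{\mathcal C}_2=0\).} The \(H\)-term in \eqref{eq:3.23} vanishes, so \eqref{eq:3.23} already gives the bound with factor \(1\). This is at most the bound with factor \(1+\lambda\), again because \(\widetilde{\mathcal C}_1\ge0\) and the integral is nonnegative. No restriction on \(H\), \(R\) or \(\theta\) is used.

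The only points needing care are the nonnegativity of the constants and of the integral, which justify each monotone step. I also assume \(\int_{B_R(p_0)}u^2\,dv_M<\infty\), which holds because \(B_R(p_0)\Subset M\).
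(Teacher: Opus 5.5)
Your proposal is correct and follows the paper's own argument. You raise the hypothesis to the power $2+2q$ to get $\widetilde{\mathcal C}_2|H|^{2+2q}\le\lambda\widetilde{\mathcal C}_1/((1-\theta)^{2+2q}R^{2+2q})$ and substitute into \eqref{eq:3.23}, noting that the $H$-term simply drops out when $\widetilde{\mathcal C}_2=0$; you also make explicit the nonnegativity and monotonicity checks that the paper leaves implicit.
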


The scale-separated estimate provides the supercritical potential bound used
in the iteration.  Let \(s=2+q\) and set
\[
E_R=\frac{1}{R^{n-2}}\int_{B_R(p_0)}u^2\,dv_M,\qquad
T_R=1+H^2R^2,\qquad
\mathcal E_R=T_R^{s-1}E_R.
\]
The quantity \(\mathcal E_R\) is invariant under ambient dilations.

\begin{theorem}\label{thm:4.4}
Assume the hypotheses of Theorem~\ref{thm:3.4}, with
\[
n>2,\qquad s=2+q>\frac n2.
\]
Then, for every \(0<\theta<1\),
\begin{equation}\label{eq:4.10}
\sup_{B_{\theta R}(p_0)}u^2
\le
\frac{C(n,q,\theta)}{R^2}\mathcal E_R^{1/s}
\left(T_R+\mathcal E_R^{\mu/s}\right)^\sigma,
\end{equation}
where
\[
\mu=\frac{2s}{2s-n},
\qquad
\sigma=\frac{n}{2s}.
\]
Consequently,
\[
\sup_{B_{\theta R}(p_0)}u^2
\le
C(n,q,\theta)
\left(\frac{1}{R^2}+H^2\right)
\mathcal E_R^{1/s}
\left(1+\mathcal E_R^{\mu/s}\right)^\sigma.
\]
\end{theorem}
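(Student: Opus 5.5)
We will run a De Giorgi--Nash--Moser iteration for the Simons-type inequality satisfied by \(u=|\trlessA|\), with the potential controlled in the supercritical space \(L^{s}\), \(s=2+q>n/2\), by Theorem~\ref{thm:3.4}. By scale invariance of \(\mathcal E_R\) and of the claimed bound, we may rescale so that \(R=1\); then \(T=1+H^2\) and \(E=\int_{B_1}u^2\). The starting point is the CMC Simons inequality in the form
\[
u\Delta u+|A|^2u^2\ge \frac{2}{n}|\nabla u|^2-C(n)|H|u^3,
\]
from which we extract a linear differential inequality \(\Delta u\ge -V u\) in the weak sense. Here \(V=|A|^2+C|H|u\le C(u^2+H^2)\), using \(|A|^2=u^2+nH^2\) and the refined Kato gain only as a bonus. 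The potential splits as \(V\le C u^2+CH^2\), and we estimate \(\|V\|_{L^s(B_{\theta'})}\le C\|u^2\|_{L^s}+CH^2|B_{\theta'}|^{1/s}\). The first term is \((\int u^{4+2q})^{1/s}\), which Theorem~\ref{thm:3.4} (with \(R=1\), intermediate radius \(\theta'\in(\theta,1)\)) bounds by \(\bigl(C(T^{1+q})E\bigr)^{1/s}=C\,\mathcal E^{1/s}\), since \(1+q=s-1\). For the second term we need an area bound; we avoid it by keeping \(H^2\) as a constant zero-order term, which only produces the factor \(T\) in the iteration.

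Next we carry out the Moser iteration on \(B_{\theta'}\). We use the Michael--Simon Sobolev inequality
\[
\Bigl(\int h^{2n/(n-2)}\Bigr)^{(n-2)/n}\le C\int(|\nabla h|^2+H^2h^2),
\]
valid for compactly supported \(h\) and \(n>2\). Testing \(\Delta u\ge -Vu\) against \(u^{2\beta-1}\eta^2\) gives a reverse-Sobolev step. The potential term \(\int Vu^{2\beta}\eta^2\) is handled by Hölder with exponent \(s\): \(\|V\|_s\|u^\beta\eta\|_{2s/(s-1)}^2\). Since \(2s/(s-1)<2n/(n-2)\) exactly when \(s>n/2\), interpolation between \(L^2\) and \(L^{2n/(n-2)}\) gives
\[
\|V\|_s\|w\|^2_{2s/(s-1)}\le \epsilon\|w\|^2_{2^*}+C\epsilon^{-n/(2s-n)}\|V\|_s^{\mu}\|w\|_2^2,\qquad \mu=\frac{2s}{2s-n}.
\]
Absorbing the first term yields the effective constant \(K=T+\|V\|_s^{\mu}\) per step, polynomial in \(\beta\). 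We iterate over radii shrinking from \(\theta'\) to \(\theta\) with \(\beta_k=\chi^k\), \(\chi=n/(n-2)\). The standard summation gives
\[
\sup_{B_\theta}u^2\le C(n,q,\theta)\,K^{n/2}\int_{B_{\theta'}}u^2 .
\]

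It remains to convert this into the claimed shape \(\mathcal E^{1/s}(T+\mathcal E^{\mu/s})^{\sigma}\) with \(\sigma=n/(2s)\). The cleanest route is to start the iteration not from \(L^2\) but from \(L^{2s}\), i.e.\ from \(\|u^2\|_{L^s(B_{\theta'})}\le C\mathcal E^{1/s}\). Then the sup bound reads \(\sup u^2\le C K^{n/(2s)}\|u^2\|_{L^s}\), because the homogeneity of the iteration from \(L^p\) produces the exponent \(n/(2\cdot p/2)\) on the constant. With \(\|V\|_s^\mu\le C\mathcal E^{\mu/s}+CT^{\mu}\)-type terms, we obtain the bound. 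The \(H^2\) part of \(V\) should be treated as a zero-order term bounded by \(T\), not inside \(\|V\|_s\), so that only \(T\) appears linearly. Undoing the scaling gives \eqref{eq:4.10}, and the consequence follows from \(T_R/R^2=R^{-2}+H^2\) together with \(T_R+\mathcal E^{\mu/s}\le T_R(1+\mathcal E^{\mu/s})\), using \(\sigma\le1\).

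The main obstacle is bookkeeping the \(H\)-dependent terms. These are the \(|H|u^3\) term in Simons' inequality and the \(H^2h^2\) term in Michael--Simon. They must both be absorbed into the single factor \(T\) with exponent \(\sigma\), without needing an area bound for \(B_{\theta'}\). I would first try writing \(|H|u\le \tfrac12(u^2+H^2)\) and treating all \(H^2\) contributions as the constant \(T\) in each Caccioppoli step.
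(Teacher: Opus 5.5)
Your proposal is correct and follows essentially the paper's proof. The common structure is a Moser iteration in which the supercritical $L^s$ potential is handled by the Michael--Simon inequality, $L^2$--$L^{2\chi}$ interpolation and Young's inequality with exponent $\mu$; it starts from $\|u^2\|_{L^s}$ on an intermediate ball, bounded by $C\,\mathcal E_R^{1/s}$ via Theorem~\ref{thm:3.4}, so the geometric sum gives $\sigma=n/(2s)$, and all $H^2$ contributions are kept as zero-order terms that produce only $T_R$. The remaining differences are cosmetic: the paper iterates on the smooth function $w=u^2$ using $\Delta w\ge-(3+\alpha^2)(w^2+H^2w)$ (Lemma~\ref{lem:4.1}), and it keeps $R$ through the normalized quantity $A_{\tau,R}$ rather than rescaling to $R=1$, which also avoids the regularization near $\{u=0\}$ that your weak inequality $\Delta u\ge -Vu$ needs.
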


\begin{corollary}\label{cor:4.5}
Under the assumptions of Theorem~\ref{thm:4.4}, for every \(\delta>0\)
there exists
\(\varepsilon=\varepsilon(n,q,\theta,\delta)>0\) such that
\begin{equation}\label{eq:4.12}
\mathcal E_R
=
\frac{\left(1+H^2R^2\right)^{1+q}}{R^{n-2}}
\int_{B_R(p_0)}u^2\,dv_M
\le\varepsilon
\end{equation}
implies
\begin{equation}\label{eq:4.13}
\sup_{B_{\theta R}(p_0)}u^2
\le
\delta\left(\frac{1}{R^2}+H^2\right).
\end{equation}
\end{corollary}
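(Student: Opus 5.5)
This follows directly from the second inequality in Theorem~\ref{thm:4.4}; no new analysis is needed, only the way the prefactor depends on \(\mathcal E_R\). Fix \(n,q,\theta\), and write \(C_0=C(n,q,\theta)\) for the constant in Theorem~\ref{thm:4.4}. Theorem~\ref{thm:4.4} gives
\[
\sup_{B_{\theta R}(p_0)}u^2\le C_0\left(\frac1{R^2}+H^2\right)\Phi(\mathcal E_R),
\qquad
\Phi(t)=t^{1/s}\bigl(1+t^{\mu/s}\bigr)^{\sigma}.
\]
Here \(s=2+q>n/2\), so \(\mu=2s/(2s-n)>0\), \(\sigma=n/(2s)>0\) and \(1/s>0\). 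The function \(\Phi\) is therefore continuous and nondecreasing on \([0,\infty)\), with \(\Phi(0)=0\).

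Given \(\delta>0\), choose \(\varepsilon\in(0,1]\) with \(C_0\,\varepsilon^{1/s}2^{\sigma}\le\delta\). Concretely, take \(\varepsilon=\min\{1,(\delta 2^{-\sigma}/C_0)^{s}\}\). Suppose \(\mathcal E_R\le\varepsilon\le1\). Then \(1+\mathcal E_R^{\mu/s}\le 2\), and
\[
C_0\,\Phi(\mathcal E_R)\le C_0\,\varepsilon^{1/s}2^{\sigma}\le\delta.
\]
Inserting this into the display above gives \eqref{eq:4.13}. The choice of \(\varepsilon\) uses only \(n,q,\theta,\delta\), through \(C_0\), \(s\) and \(\sigma\).

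The last step is to check that the hypothesis \eqref{eq:4.12} is literally \(\mathcal E_R\le\varepsilon\). From the definitions, \(T_R^{s-1}=(1+H^2R^2)^{1+q}\), so \(\mathcal E_R=T_R^{s-1}E_R\) is exactly the expression on the left of \eqref{eq:4.12}.

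There is no real obstacle. The only points to watch are that positivity of \(\mu\) really needs the supercritical assumption \(2s>n\), and that \(\varepsilon\) must not depend on \(R\) or \(H\). The latter holds because all scale dependence has already been absorbed into the dilation-invariant quantity \(\mathcal E_R\) and the factor \(R^{-2}+H^2\).
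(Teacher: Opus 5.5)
Your argument is correct and is essentially the paper's: restrict to $\mathcal E_R\le 1$ so that the factor involving $\mathcal E_R^{\mu/s}$ is bounded by a constant, which leaves $C(n,q,\theta)\bigl(R^{-2}+H^2\bigr)\mathcal E_R^{1/s}$, and then take $\varepsilon$ of the form $\min\{1,(\delta/C)^s\}$. The only cosmetic difference is that you start from the already-stated second inequality of Theorem~\ref{thm:4.4} and absorb $2^{\sigma}$ explicitly, whereas the paper goes back through \eqref{eq:4.9} and the bound $A_{\tau,R}\le C\mathcal E_R^{1/s}$.
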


The H\"older step itself only requires \(0<q<\sqrt{2/n}\).  The additional
condition \(2+q>n/2\) appears when the scale-separated integral estimate is
used as the initial \(L^s\)-bound in the Moser iteration.

\subsection*{Organization of the paper}

Section~\ref{sec:2} gives the H\"older proof in the minimal case and recalls
its Bernstein consequence.  Section~\ref{sec:3} treats the CMC terms and
proves the scale-separated integral estimate.  Section~\ref{sec:4} derives
the pointwise estimate by Moser iteration.

\section{The Minimal Case: Gradient Estimate and H\"older Closure}
\label{sec:2}

\subsection*{The SSY gradient estimate}

Let $M^n\subset\R^{n+1}$ be a complete two-sided stable minimal hypersurface and set
$u=|A|$, where $A$ denotes the second fundamental form.  Throughout the paper,
$dv_M$ denotes the Riemannian volume element on $M$, and all integrals are taken
with respect to this measure unless otherwise noted.  The stability inequality
takes the form (see \cite{SSY75,MM84})
\begin{equation}\label{eq:2.1}
\int_M u^2\phi^2\,dv_M
\le
\int_M |\nabla\phi|^2\,dv_M
\qquad
\forall\,\phi\in C_c^\infty(M).
\end{equation}
The estimates below extend by density to compactly supported Lipschitz test
functions; see \cite{MM84}.  This allows us to use distance cutoffs and powers
such as \(f^{1+q}\).  Non-integral powers of \(u\) are justified by replacing
\(u\) with \(u_\varepsilon=(u^2+\varepsilon^2)^{1/2}\) and then letting
\(\varepsilon\downarrow0\).  We use the same regularization for
\(u=|\mathring A|\) in the CMC case.

For $0<q<\sqrt{2/n}$, define
\begin{equation}\label{eq:2.2}
A(n,q)=\frac{2}{n}-q^2>0.
\end{equation}
We also use the Simons inequality for minimal hypersurfaces
\cite{Simons68,SSY75}:
\begin{equation}\label{eq:2.3}
u\Delta u+u^4\ge \frac{2}{n}|\nabla u|^2.
\end{equation}
Multiplying \eqref{eq:2.3} by $u^{2q}f^2$ and testing stability with
$u^{1+q}f$ gives the SSY gradient estimate~\cite{SSY75}.

\begin{proposition}\label{prop:2.1}
For every $f\in C_c^\infty(M)$ and every $0<q<\sqrt{2/n}$,
\begin{equation}\label{eq:2.4}
\int_M u^{2q}f^2|\nabla u|^2\,dv_M
\le
C_1(n,q)\int_M u^{2+2q}|\nabla f|^2\,dv_M,
\end{equation}
where
\begin{equation}\label{eq:2.5}
C_1(n,q)
=
\frac{2}{A(n,q)}
+
\frac{8(q^2+2q+2)}{A(n,q)^2},
\end{equation}
and $A(n,q)$ is given by~\eqref{eq:2.2}.
\end{proposition}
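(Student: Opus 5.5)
The plan is the classical Schoen--Simon--Yau computation: combine the Simons inequality \eqref{eq:2.3} with the stability inequality \eqref{eq:2.1}. The constant \(A(n,q)=2/n-q^2\) comes out of the algebra automatically, and a single Young step closes the estimate. Throughout, \(u\) is replaced by \(u_\varepsilon=(u^2+\varepsilon^2)^{1/2}\) so that non-integral powers are smooth. At the end we let \(\varepsilon\downarrow0\) by monotone/dominated convergence, as indicated after \eqref{eq:2.1}.

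\emph{Step 1 (Simons side).} Multiply \eqref{eq:2.3} by \(u^{2q}f^2\) and integrate. Integrating by parts gives
\[
\int u^{1+2q}f^2\Delta u=-(1+2q)\int u^{2q}f^2|\nabla u|^2-2\int u^{1+2q}f\,\nabla u\cdot\nabla f .
\]
Hence
\[
\Bigl(\tfrac2n+1+2q\Bigr)\int u^{2q}f^2|\nabla u|^2\le\int u^{4+2q}f^2-2\int u^{1+2q}f\,\nabla u\cdot\nabla f .
\]

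\emph{Step 2 (stability side).} Test \eqref{eq:2.1} with \(\phi=u^{1+q}f\). Expanding \(|\nabla\phi|^2\) gives
\[
\int u^{4+2q}f^2\le(1+q)^2\int u^{2q}f^2|\nabla u|^2+2(1+q)\int u^{1+2q}f\,\nabla u\cdot\nabla f+\int u^{2+2q}|\nabla f|^2 .
\]
Substitute this into Step 1. The coefficient on the left becomes \(\tfrac2n+1+2q-(1+q)^2=A(n,q)>0\), because \(q<\sqrt{2/n}\). The mixed terms combine with coefficient \(2q\). Writing \(a=u^{q}f|\nabla u|\) and \(b=u^{1+q}|\nabla f|\), we obtain
\[
A\int a^2\le 2q\int ab+\int b^2 .
\]

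\emph{Step 3 (absorption).} Apply Young's inequality in the form \(2q\,ab\le \tfrac A2a^2+\tfrac{2q^2}{A}b^2\). Absorbing the \(a^2\) term into the left side gives
\[
\int u^{2q}f^2|\nabla u|^2\le\Bigl(\tfrac2A+\tfrac{4q^2}{A^2}\Bigr)\int u^{2+2q}|\nabla f|^2 .
\]
Since \(4q^2\le 8(q^2+2q+2)\), this constant is bounded by \(C_1(n,q)\) in \eqref{eq:2.5}, which proves \eqref{eq:2.4}. The paper's larger constant presumably comes from estimating the two mixed terms separately in absolute value. If so, the combined coefficient is at most \(2(1+q)+2\), and Young then gives \(2/A+4(q+2)^2/A^2\), which is still \(\le C_1\). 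Either bookkeeping suffices.

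The only genuinely delicate point is the justification rather than the algebra. Integrating by parts with \(u^{1+2q}\) and testing stability with \(u^{1+q}f\) require regularity of \(u\) near its zero set. Working with \(u_\varepsilon\) handles this. Note that \(u_\varepsilon\) satisfies \eqref{eq:2.3} only up to an error: \(|\nabla u_\varepsilon|\le|\nabla u|\), and the term \(u_\varepsilon\Delta u_\varepsilon\) differs from \(u\Delta u\) by nonnegative quantities. I would check that the errors all have the favorable sign or vanish as \(\varepsilon\to0\) on the compact support of \(f\). That check is the one step I would write out carefully.
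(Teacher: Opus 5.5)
Your proof is correct and follows the paper's route: Simons inequality times $u^{2q}f^2$, integration by parts, stability with $\phi=u^{1+q}f$, then absorption. The one difference is bookkeeping: you add the signed cross terms before using Young, so only the coefficient $2(1+q)-2=2q$ survives and a single Young step gives the sharper constant $2/A+4q^2/A^2\le C_1(n,q)$ (take $a=u^q|f|\,|\nabla u|$ if $f$ changes sign), whereas the paper bounds the two cross terms separately in absolute value with $\varepsilon_1=\varepsilon_2=A/4$. The regularization check you defer does go through: refined Kato gives $|\nabla A|^2\ge(1+\frac2n)|\nabla u_\varepsilon|^2$, so $u_\varepsilon$ satisfies \eqref{eq:2.3} exactly (with $u_\varepsilon^4$), the only $\varepsilon$-error is the term $\varepsilon^2\int u_\varepsilon^{2+2q}f^2$ from stability, and the limit follows by Fatou on the left and dominated convergence on the right.
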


\begin{proof}
Multiplication of~\eqref{eq:2.3} by $u^{2q}f^2$ gives
\begin{equation}\label{eq:2.6}
\int_M u^{4+2q}f^2\,dv_M
+
\int_M u^{2q+1}f^2\Delta u\,dv_M
\ge
\frac{2}{n}\int_M u^{2q}f^2|\nabla u|^2\,dv_M.
\end{equation}

Integration by parts gives
\[
\int_M u^{2q+1}f^2\Delta u\,dv_M
=
-\int_M \nabla(u^{2q+1}f^2)\cdot \nabla u\,dv_M.
\]
Since
\[
\nabla(u^{2q+1}f^2)
=
(2q+1)u^{2q}f^2\nabla u+2u^{2q+1}f\,\nabla f,
\]
this becomes
\begin{equation}\label{eq:2.7}
\int_M u^{2q+1}f^2\Delta u\,dv_M
=
-(2q+1)\int_M u^{2q}f^2|\nabla u|^2\,dv_M
-2\int_M u^{2q+1}f\,\nabla f\cdot \nabla u\,dv_M.
\end{equation}

Substitution in~\eqref{eq:2.6} yields
\begin{equation}\label{eq:2.8}
\Bigl(\frac{2}{n}+2q+1\Bigr)\int_M u^{2q}f^2|\nabla u|^2\,dv_M
\le
\int_M u^{4+2q}f^2\,dv_M
+2\int_M u^{2q+1}f\,|\nabla f|\,|\nabla u|\,dv_M.
\end{equation}

For every $\varepsilon_1>0$, Young's inequality gives
\[
2u^{2q+1}|f|\,|\nabla f|\,|\nabla u|
=
2\bigl(u^{q}|f|\,|\nabla u|\bigr)\bigl(u^{q+1}|\nabla f|\bigr)
\le
\varepsilon_1 u^{2q}f^2|\nabla u|^2
+\frac{1}{\varepsilon_1}u^{2+2q}|\nabla f|^2.
\]
Hence
\begin{equation}\label{eq:2.9}
\Bigl(\frac{2}{n}+2q+1-\varepsilon_1\Bigr)
\int_M u^{2q}f^2|\nabla u|^2\,dv_M
\le
\frac{1}{\varepsilon_1}\int_M u^{2+2q}|\nabla f|^2\,dv_M
+\int_M u^{4+2q}f^2\,dv_M.
\end{equation}

Testing~\eqref{eq:2.1} with $\phi=u^{1+q}f$ gives
\[
\int_M u^{4+2q}f^2\,dv_M
\le
\int_M |\nabla(u^{1+q}f)|^2\,dv_M.
\]
Since
\[
\nabla(u^{1+q}f)=(1+q)u^{q}f\,\nabla u+u^{1+q}\nabla f,
\]
we have
\begin{align}
\int_M u^{4+2q}f^2\,dv_M
\le\;&
(1+q)^2\int_M u^{2q}f^2|\nabla u|^2\,dv_M \notag\\
&+2(1+q)\int_M u^{2q+1}f\,\langle \nabla f,\nabla u\rangle\,dv_M \notag\\
&+\int_M u^{2+2q}|\nabla f|^2\,dv_M. \label{eq:2.10}
\end{align}

The mixed term satisfies, for every $\varepsilon_2>0$,
\[
2(1+q)u^{2q+1}|f|\,|\nabla f|\,|\nabla u|
\le
\varepsilon_2 u^{2q}f^2|\nabla u|^2
+\frac{(1+q)^2}{\varepsilon_2}u^{2+2q}|\nabla f|^2.
\]
Thus
\begin{equation}\label{eq:2.11}
\int_M u^{4+2q}f^2\,dv_M
\le
\bigl((1+q)^2+\varepsilon_2\bigr)
\int_M u^{2q}f^2|\nabla u|^2\,dv_M
+\Bigl(1+\frac{(1+q)^2}{\varepsilon_2}\Bigr)
\int_M u^{2+2q}|\nabla f|^2\,dv_M.
\end{equation}

Substituting~\eqref{eq:2.11} into~\eqref{eq:2.9}, we obtain
\begin{align}
&\Bigl(\frac{2}{n}+2q+1-\varepsilon_1-(1+q)^2-\varepsilon_2\Bigr)
\int_M u^{2q}f^2|\nabla u|^2\,dv_M \notag\\
&\qquad\le
\Bigl(
\frac{1}{\varepsilon_1}
+1
+\frac{(1+q)^2}{\varepsilon_2}
\Bigr)
\int_M u^{2+2q}|\nabla f|^2\,dv_M. \label{eq:2.12}
\end{align}

Since $\frac{2}{n}+2q+1-(1+q)^2=A(n,q)$,
equation~\eqref{eq:2.12} becomes
\begin{equation}\label{eq:2.13}
\bigl(A(n,q)-\varepsilon_1-\varepsilon_2\bigr)
\int_M u^{2q}f^2|\nabla u|^2\,dv_M
\le
\Bigl(
\frac{1}{\varepsilon_1}
+1
+\frac{(1+q)^2}{\varepsilon_2}
\Bigr)
\int_M u^{2+2q}|\nabla f|^2\,dv_M.
\end{equation}

Taking $\varepsilon_1=\varepsilon_2=A(n,q)/4$ in~\eqref{eq:2.13} gives
\begin{align*}
\int_M u^{2q}f^2|\nabla u|^2\,dv_M
&\le
\frac{2}{A(n,q)}
\Bigl(
\frac{4}{A(n,q)}+1+\frac{4(1+q)^2}{A(n,q)}
\Bigr)
\int_M u^{2+2q}|\nabla f|^2\,dv_M \\[4pt]
&=
\Bigl(
\frac{2}{A(n,q)}
+\frac{8(1+(1+q)^2)}{A(n,q)^2}
\Bigr)
\int_M u^{2+2q}|\nabla f|^2\,dv_M.
\end{align*}
Since $1+(1+q)^2=q^2+2q+2$, this proves
\eqref{eq:2.4}--\eqref{eq:2.5}.
\end{proof}

\subsection*{Proof of the H\"older-closure theorem}

\begin{proof}[Proof of Theorem~\ref{thm:2.2}]
The compactly supported Lipschitz function \(f^{1+q}\) is admissible by the
density convention above.  Proposition~\ref{prop:2.1}, together with
\[
\nabla(f^{1+q})=(1+q)f^{q}\nabla f,
\]
gives
\begin{equation}\label{eq:2.16}
\int_M u^{2q}f^{2+2q}|\nabla u|^2\,dv_M
\le
(1+q)^2C_1(n,q)
\int_M u^{2+2q}f^{2q}|\nabla f|^2\,dv_M.
\end{equation}

Test~\eqref{eq:2.1} with $\phi=u^{1+q}f^{1+q}$.  Then
\[
\int_M u^{4+2q}f^{2+2q}\,dv_M
\le
\int_M |\nabla(u^{1+q}f^{1+q})|^2\,dv_M.
\]

Expanding the gradient and using $(a+b)^2\le 2a^2+2b^2$, we obtain
\begin{align}
\int_M u^{4+2q}f^{2+2q}\,dv_M
&\le
\int_M\bigl((1+q)u^{q}f^{1+q}\nabla u+(1+q)u^{1+q}f^{q}\nabla f\bigr)^2\,dv_M \notag\\
&\le
2(1+q)^2\int_M u^{2q}f^{2+2q}|\nabla u|^2\,dv_M \notag\\
&\qquad+2(1+q)^2\int_M u^{2+2q}f^{2q}|\nabla f|^2\,dv_M. \label{eq:2.17}
\end{align}

Substituting~\eqref{eq:2.16} into~\eqref{eq:2.17} gives
\begin{equation}\label{eq:2.18}
\int_M u^{4+2q}f^{2+2q}\,dv_M
\le
C_3^H(n,q)\int_M u^{2+2q}f^{2q}|\nabla f|^2\,dv_M,
\end{equation}
where
\begin{equation}\label{eq:2.19}
C_3^H(n,q)=2(1+q)^2\bigl((1+q)^2C_1(n,q)+1\bigr).
\end{equation}

The integrand on the right factors as
\begin{equation}\label{eq:2.20}
u^{2+2q}f^{2q}|\nabla f|^2
=
\bigl(u^{4+2q}f^{2+2q}\bigr)^{\frac{q}{1+q}}
\,
\bigl(u^2|\nabla f|^{2+2q}\bigr)^{\frac{1}{1+q}}.
\end{equation}
H\"older's inequality with conjugate exponents
$\frac{1+q}{q}$ and $1+q$ then gives
\begin{align}
\int_M u^{2+2q}f^{2q}|\nabla f|^2\,dv_M
&\le
\Bigl(
\int_M u^{4+2q}f^{2+2q}\,dv_M
\Bigr)^{\frac{q}{1+q}}
\Bigl(
\int_M u^2|\nabla f|^{2+2q}\,dv_M
\Bigr)^{\frac{1}{1+q}}. \label{eq:2.21}
\end{align}

Combining~\eqref{eq:2.18} and~\eqref{eq:2.21} gives
\[
\int_M u^{4+2q}f^{2+2q}\,dv_M
\le
C_3^H(n,q)
\Bigl(
\int_M u^{4+2q}f^{2+2q}\,dv_M
\Bigr)^{\frac{q}{1+q}}
\Bigl(
\int_M u^2|\nabla f|^{2+2q}\,dv_M
\Bigr)^{\frac{1}{1+q}}.
\]

If the left-hand integral is nonzero, divide by its \(q/(1+q)\)-power and
raise the result to \(1+q\).  The zero case needs no further argument.  Thus
\[
\int_M u^{4+2q}f^{2+2q}\,dv_M
\le
\bigl(C_3^H(n,q)\bigr)^{1+q}
\int_M u^2|\nabla f|^{2+2q}\,dv_M.
\]

Thus $C_H(n,q)=(C_3^H(n,q))^{1+q}$.  By~\eqref{eq:2.19},
\[
C_H(n,q)
=
\Bigl(2(1+q)^2\bigl((1+q)^2C_1(n,q)+1\bigr)\Bigr)^{1+q},
\]
and substituting~\eqref{eq:2.5} gives
\[
(1+q)^2C_1(n,q)+1
=
1+\frac{2(1+q)^2}{A(n,q)}
+\frac{8(1+q)^2(q^2+2q+2)}{A(n,q)^2},
\]
which is~\eqref{eq:2.15}.
\end{proof}

\subsection*{The Bernstein consequence}

\begin{proof}[Proof of Corollary~\ref{cor:bernstein}]
Fix $p_0\in M$, $r>0$, and $0<\theta<1$.  Choose the standard Lipschitz
cutoff
\[
f(x)=
\begin{cases}
1, & d_M(x,p_0)\le \theta r,\\[0.4em]
\dfrac{r-d_M(x,p_0)}{(1-\theta)r}, & \theta r<d_M(x,p_0)<r,\\[0.6em]
0, & d_M(x,p_0)\ge r.
\end{cases}
\]
Then $|\nabla f|\le 1/((1-\theta)r)$ almost everywhere.  Applying
Theorem~\ref{thm:2.2} gives
\[
\int_{B_{\theta r}(p_0)}u^{4+2q}\,dv_M
\le
\frac{C_H(n,q)}{(1-\theta)^{2+2q}r^{2+2q}}
\int_{B_r(p_0)}u^2\,dv_M,
\]
where $C_H(n,q)$ is the explicit constant from~\eqref{eq:2.15}.

Since \(M\) is an entire minimal graph over \(\mathbb R^n\), its subgraph is
a set of least perimeter.  The standard comparison estimate for least-area
boundaries, together with the inclusion of an intrinsic geodesic ball in the
corresponding extrinsic Euclidean ball, gives
\[
\operatorname{Vol}\bigl(B_{2r}^M(p_0)\bigr)
\le
\mathcal H^n\bigl(M\cap B_{2r}^{\mathbb R^{n+1}}(p_0)\bigr)
\le C_n r^{\,n}.
\]
See the remark following Theorem~2 in Schoen--Simon--Yau~\cite{SSY75}, or
Massari--Miranda~\cite[Chap.~3]{MM84}.
Choose a cutoff function
\(\eta\in C_c^\infty(B_{2r}(p_0))\) such that
\[
0\le \eta\le 1,\qquad
\eta\equiv1\ \text{on }B_r(p_0),\qquad
|\nabla\eta|\le \frac{2}{r}.
\]
Applying the stability inequality~\eqref{eq:2.1} with test function
\(\eta\), we obtain
\[
\int_{B_r(p_0)}u^2\,dv_M
\le
\int_M u^2\eta^2\,dv_M
\le
\int_M|\nabla\eta|^2\,dv_M
\le
\frac{C}{r^2}\operatorname{Vol}(B_{2r}(p_0))
\le
C r^{\,n-2}.
\]

When $n\le 5$, one can choose $q>0$ so that
$q<\sqrt{2/n}$ and $2+2q>n-2$: for $n=2,3,4$ the second inequality is
automatic for every $q>0$, while for $n=5$ one may take any
$q\in(1/2,\sqrt{2/5})$.  For such $q$,
\[
\int_{B_{\theta r}(p_0)}u^{4+2q}\,dv_M
\le
C' r^{\,n-4-2q}\to 0\qquad\text{as } r\to\infty.
\]
For fixed $\rho>0$ and $r>\rho/\theta$,
\[
\int_{B_\rho(p_0)}u^{4+2q}\,dv_M
\le
\int_{B_{\theta r}(p_0)}u^{4+2q}\,dv_M
\le
C' r^{\,n-4-2q}.
\]
Letting $r\to\infty$ gives
\[
\int_{B_\rho(p_0)}u^{4+2q}\,dv_M=0.
\]
Thus $u\equiv0$ by continuity.  Hence $M$ is totally geodesic and, being a
graph, is an affine hyperplane.
\end{proof}

\section{The CMC Case: Scale-Separated Integral Estimates}
\label{sec:3}

\subsection*{The traceless Simons inequality}

Let $M^n\subset \R^{n+1}$ be an oriented two-sided CMC hypersurface with
constant mean curvature $H$.  Denote its second fundamental form by
$A=(h_{ij})$ and its induced metric by $g$, and set
\[
\trlessA = A - Hg,
\qquad
\trlessA_{ij}=h_{ij}-H\delta_{ij}.
\]
Thus $u:=|\trlessA|$ satisfies $|A|^2=u^2+nH^2$.  We assume throughout this
section that $M$ is \emph{strongly stable}, i.e.
\begin{equation}\label{eq:3.1}
\int_M |A|^2\phi^2\,dv_M
\le
\int_M |\nabla \phi|^2\,dv_M
\qquad
\forall\,\phi\in C_c^\infty(M).
\end{equation}
The classical CMC stability of \cite{BdC84}, restricted to volume-preserving
variations, is weaker; see also \cite{BCW19,Lu15}.

\begin{lemma}\label{lem:3.1}
Let $M^n\subset \R^{n+1}$ have constant mean curvature $H$, and set
$u=|\trlessA|$.  Then
\begin{equation}\label{eq:3.2}
u\Delta u+u^4+\frac{n(n-2)}{\sqrt{n(n-1)}}\,|H|\,u^3-nH^2u^2
\ge
\frac{2}{n}|\nabla u|^2.
\end{equation}
\end{lemma}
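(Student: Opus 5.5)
We follow the standard derivation of Simons' identity, applied to the traceless tensor \(\trlessA\). Since \(H\) is constant, the Codazzi equations give \(\nabla_k \trlessA_{ij}=\nabla_k h_{ij}\), and \(\trlessA\) is a trace-free Codazzi tensor. Simons' identity for a hypersurface in \(\R^{n+1}\) with constant mean curvature reads
\[
\tfrac12\Delta|A|^2=|\nabla A|^2+nH\operatorname{tr}(A^3)-|A|^4 ,
\]
with the convention \(H=\frac1n\operatorname{tr}A\). Since \(|A|^2=u^2+nH^2\) with \(H\) constant, the left side equals \(\tfrac12\Delta u^2=u\Delta u+|\nabla u|^2\), and \(|\nabla A|^2=|\nabla\trlessA|^2\). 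The first task is to rewrite the zeroth-order terms in terms of \(u\).

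Expanding \(A=\trlessA+Hg\) and using \(\operatorname{tr}\trlessA=0\) gives
\[
\operatorname{tr}A^3=\operatorname{tr}\trlessA^3+3Hu^2+nH^3,\qquad |A|^4=u^4+2nH^2u^2+n^2H^4 .
\]
Hence
\[
nH\operatorname{tr}A^3-|A|^4=nH\operatorname{tr}\trlessA^3+nH^2u^2-u^4 .
\]
For trace-free symmetric matrices, Okumura's lemma gives \(|\operatorname{tr}\trlessA^3|\le\frac{n-2}{\sqrt{n(n-1)}}u^3\). This yields
\[
u\Delta u+|\nabla u|^2\ge|\nabla\trlessA|^2-\frac{n(n-2)}{\sqrt{n(n-1)}}|H|u^3+nH^2u^2-u^4 .
\]
This is exactly the source of the cubic and quadratic terms in~\eqref{eq:3.2}.

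The remaining and main step is the refined Kato inequality for trace-free Codazzi tensors,
\[
|\nabla\trlessA|^2\ge\Bigl(1+\tfrac2n\Bigr)|\nabla u|^2 ,
\]
which holds wherever \(u\ne0\). Substituting it cancels the \(|\nabla u|^2\) on the left and leaves \(\frac2n|\nabla u|^2\), which is~\eqref{eq:3.2}.

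For the Kato inequality, fix a point and diagonalize \(\trlessA=\operatorname{diag}(\lambda_i)\), so that \(u\nabla_k u=\sum_i\lambda_i\trlessA_{iik}\). Write \(a_{ijk}=\trlessA_{ij,k}\); these are fully symmetric by Codazzi and trace-free in each pair of indices. By Cauchy--Schwarz, \(|\nabla u|^2\le\sum_k\sum_i a_{iik}^2\). We then bound \(\sum_i a_{iik}^2\) for each \(k\), exploiting symmetry and tracelessness: \(a_{kkk}=-\sum_{i\ne k}a_{iik}\), and each \(a_{iik}\) with \(i\ne k\) also appears as \(a_{iki}\) and \(a_{kii}\) in \(|\nabla\trlessA|^2\). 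Optimizing this with Cauchy--Schwarz gives \(|\nabla\trlessA|^2\ge\frac{n+2}{n}\sum_k\sum_i a_{iik}^2\), which is the bookkeeping in Schoen--Simon--Yau adapted to the trace-free case.

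I expect this constant-tracking to be the main obstacle, along with confirming that the sign convention for \(H\) (normalized trace) matches the coefficient \(n(n-2)/\sqrt{n(n-1)}\). At points where \(u=0\), the inequality is interpreted via the regularization \(u_\varepsilon\) described in Section~\ref{sec:2}.
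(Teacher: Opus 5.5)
Your proposal is correct and follows essentially the same route as the paper: you rewrite Simons' identity in terms of $\mathring A$ to get $\tfrac12\Delta u^2=nH\operatorname{tr}(\mathring A^3)+nH^2u^2-u^4+|\nabla\mathring A|^2$, bound the cubic term by Okumura's inequality, and absorb $|\nabla u|^2$ with the refined Kato inequality $|\nabla\mathring A|^2\ge(1+\frac2n)|\nabla u|^2$, handling the zero set of $u$ by the same regularization. The only difference is that you sketch the Schoen--Simon--Yau-style proof of the Kato inequality where the paper cites Bourguignon; your bookkeeping is sound, since with $P=\sum_k a_{kkk}^2$ and $Q=\sum_{i\ne k}a_{iik}^2$, tracelessness gives $P\le(n-1)Q$, which is exactly what $P+3Q\ge(1+\frac2n)(P+Q)$ requires.
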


\begin{proof}
For a CMC hypersurface in Euclidean space, the Simons identity
\cite{NS69,Lu15} reads
\begin{equation}\label{eq:3.3}
\Delta h_{ij}=nH\sum_k h_{ik}h_{kj}-|A|^2h_{ij}.
\end{equation}
Since $H$ is constant, $\Delta\trlessA_{ij}=\Delta h_{ij}$.  Substituting
$h_{ij}=\trlessA_{ij}+H\delta_{ij}$ and $|A|^2=u^2+nH^2$ into
\eqref{eq:3.3} gives
\begin{align*}
\Delta\trlessA_{ij}
&=
nH\sum_k\trlessA_{ik}\trlessA_{kj}
+nH^2\trlessA_{ij}
-u^2\trlessA_{ij}
-Hu^2\delta_{ij}.
\end{align*}

Taking the inner product with $\trlessA$ yields
\begin{equation}\label{eq:3.4}
\frac12\Delta(u^2)
=
\langle \trlessA,\Delta\trlessA\rangle+|\nabla\trlessA|^2
=
nH\operatorname{tr}(\trlessA^3)+nH^2u^2-u^4+|\nabla\trlessA|^2.
\end{equation}

The tensor $\mathring A$ is trace-free and Codazzi.  The refined Kato
inequality \cite[p.~273]{Bou81} therefore gives, where $u>0$,
\begin{equation}\label{eq:3.5}
|\nabla\mathring A|^2
\ge
\Bigl(1+\frac{2}{n}\Bigr)|\nabla u|^2,
\end{equation}
under the regularization fixed in Section~\ref{sec:2}.  Okumura's inequality
\cite{Oku74} gives
\begin{equation}\label{eq:3.6}
\bigl|\operatorname{tr}(\trlessA^3)\bigr|
\le
\frac{n-2}{\sqrt{n(n-1)}}\,u^3.
\end{equation}

Combining \eqref{eq:3.4}--\eqref{eq:3.6} with
$\frac12\Delta(u^2)=u\Delta u+|\nabla u|^2$, we obtain
\[
nH\operatorname{tr}(\trlessA^3)
\ge
-n|H|\,\bigl|\operatorname{tr}(\trlessA^3)\bigr|
\ge
-\frac{n(n-2)}{\sqrt{n(n-1)}}\,|H|\,u^3,
\]
and hence
\begin{align*}
u\Delta u+|\nabla u|^2
&\ge
\Bigl(1+\frac{2}{n}\Bigr)|\nabla u|^2
-u^4
+nH^2u^2
-\frac{n(n-2)}{\sqrt{n(n-1)}}\,|H|\,u^3.
\end{align*}
Rearranging gives~\eqref{eq:3.2}.
\end{proof}

\subsection*{The gradient estimate with CMC terms}

The calculation leading to Proposition~\ref{prop:2.1} also applies to
$u=|\mathring A|$, with the cubic and $H^2$ terms kept separate.  Related
CMC estimates appear in~\cite{INS12,Lu15}.

\begin{proposition}\label{prop:3.2}
Let $M^n\subset \R^{n+1}$ be a strongly stable CMC hypersurface, set
$u=|\trlessA|$, and fix $0<q<\sqrt{2/n}$.  Then for every cutoff function
$f\in C_c^\infty(M)$ with $0\le f\le 1$,
\begin{equation}\label{eq:3.7}
\int_M u^{2q}f^{2+2q}|\nabla u|^2\,dv_M
\le
C_0\int_M u^{2+2q}f^{2q}|\nabla f|^2\,dv_M
+B_0 H^2\int_M u^{2+2q}f^{2+2q}\,dv_M,
\end{equation}
where
\begin{align}
C_0(n,q)
&=
\frac{4(1+\delta)(1+q)^2}{A}
+\frac{16(1+q)^2\bigl[1+(1+\delta)^2(1+q)^2\bigr]}{A^2},
\label{eq:3.8}
\\[4pt]
B_0(n,q)
&=
\max\!\Bigl\{
0,\;
\frac{4n(n-2)^2(1+q)^2}{(n-1)A^2}
-\frac{8n}{A}
-\frac{n}{(1+q)^2}
\Bigr\},
\label{eq:3.9}
\end{align}
with $A=A(n,q)=2/n-q^2$ and
$\delta=\delta(n,q)=A/(4(1+q)^2)$.
\end{proposition}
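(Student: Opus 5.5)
We follow the proof of Proposition~\ref{prop:2.1}, now with the cutoff \(f^{1+q}\) built in from the start, and keep the two \(H\)-terms of Lemma~\ref{lem:3.1} separate. Write \(F=f^{1+q}\), so that \(|\nabla F|^2=(1+q)^2f^{2q}|\nabla f|^2\). Set \(I=\int u^{2q}F^2|\nabla u|^2\), \(J=\int u^{2+2q}|\nabla F|^2\), \(K=\int u^{4+2q}F^2\), and \(L=\int u^{2+2q}F^2\). The cubic term will be handled by
\[
c_n|H|u^{3+2q}F^2\le \eta\, u^{4+2q}F^2+\tfrac{c_n^2}{4\eta}H^2u^{2+2q}F^2,
\qquad c_n=\frac{n(n-2)}{\sqrt{n(n-1)}},\quad c_n^2=\frac{n(n-2)^2}{n-1}.
\]

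\emph{Simons step.} We multiply \eqref{eq:3.2} by \(u^{2q}F^2\) and integrate by parts exactly as in \eqref{eq:2.6}--\eqref{eq:2.9}. Using Young's inequality with parameter \(\varepsilon_1\) gives
\[
\Bigl(\tfrac2n+2q+1-\varepsilon_1\Bigr)I\le \tfrac1{\varepsilon_1}J+K+c_n|H|\int u^{3+2q}F^2-nH^2L.
\]
The term \(-nH^2L\) has a good sign. We keep it, since it produces the negative contributions \(-8n/A\) and \(-n/(1+q)^2\) inside \(B_0\).

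\emph{Stability step.} We test \eqref{eq:3.1} with \(\phi=u^{1+q}F\) and use \(|A|^2=u^2+nH^2\). This gives
\[
K+nH^2L\le \bigl((1+q)^2+\varepsilon_2\bigr)I+\Bigl(1+\tfrac{(1+q)^2}{\varepsilon_2}\Bigr)J.
\]
So strong stability also contributes a good \(-nH^2L\). Next we bound the cubic term by \(\eta K+\frac{c_n^2}{4\eta}H^2L\). In the Simons inequality, \(K\) then appears with coefficient \(1+\eta\). We substitute the stability bound for \((1+\eta)K\); this is where the factor \((1+\delta)\) in \(C_0\) comes from, with \(\eta=\delta=A/(4(1+q)^2)\). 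The coefficient of \(I\) becomes
\[
A-\varepsilon_1-\varepsilon_2(1+\delta)-\delta(1+q)^2 .
\]
We take \(\varepsilon_1,\varepsilon_2\) of size \(A/4\), so that this coefficient is at least \(A/4\). The coefficient of \(J\) is
\[
\tfrac1{\varepsilon_1}+(1+\delta)\Bigl(1+\tfrac{(1+q)^2}{\varepsilon_2}\Bigr).
\]
The coefficient of \(H^2L\) is \(\frac{c_n^2}{4\delta}-n-(1+\delta)n\). Dividing by \(A/4\), and recalling that \(J\) carries a factor \((1+q)^2\) relative to \(\int u^{2+2q}f^{2q}|\nabla f|^2\), should give a constant of the stated form \eqref{eq:3.8}. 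If the \(H^2\) coefficient is negative, we simply drop it, which explains the \(\max\{0,\cdot\}\) in \eqref{eq:3.9}.

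\emph{Main obstacle.} The main difficulty is bookkeeping: choosing \(\varepsilon_1,\varepsilon_2,\eta\) so that the constants come out exactly as in \eqref{eq:3.8}--\eqref{eq:3.9}. In particular, \(\frac{c_n^2}{4\delta}\cdot\frac4A=\frac{4n(n-2)^2(1+q)^2}{(n-1)A^2}\) matches the leading term of \(B_0\), which confirms the choice \(\eta=\delta\). The remaining negative terms should come from the good \(-nH^2L\) terms, weighted by the chosen parameters. If the exact negative terms do not match, it is still safe to replace them by any smaller subtraction, since discarding good terms only enlarges \(B_0\). Only the upper bound matters, and the leading term is already determined. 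The regularization \(u_\varepsilon\) and Lipschitz \(F\) are handled as in Section~\ref{sec:2}.
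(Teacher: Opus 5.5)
Your proposal is correct and follows the paper's argument: the same weight $u^{2q}f^{2+2q}$ in the Simons inequality, the test function $\phi=u^{1+q}f^{1+q}$ in strong stability, the cubic Young step with parameter $\delta$, substitution of the stability bound for $(1+\delta)X$, and discarding a negative $H^2$ coefficient. To close the bookkeeping you left open, take $\varepsilon_1=A/4$ and, in your normalization, exactly $\varepsilon_2=A/(4(1+\delta))$ (the paper's $\varepsilon_2=A/(4(1+\delta)(1+q)^2)$); this makes the $I$-coefficient exactly $A/4$ and reproduces \eqref{eq:3.8}, while $\frac4A\bigl(-n-(1+\delta)n\bigr)=-\frac{8n}{A}-\frac{n}{(1+q)^2}$ gives exactly the negative terms in \eqref{eq:3.9}.
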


\begin{proof}
Write
\begin{equation}\label{eq:3.10}
\begin{aligned}
I&=\int_M u^{2q}f^{2+2q}|\nabla u|^2\,dv_M, &\qquad
F&=\int_M u^{2+2q}f^{2q}|\nabla f|^2\,dv_M,\\
X&=\int_M u^{4+2q}f^{2+2q}\,dv_M, &
K&=\int_M u^{2+2q}f^{2+2q}\,dv_M,\\
J&=\int_M |H|\,u^{3+2q}f^{2+2q}\,dv_M, &
\alpha&=\frac{n(n-2)}{\sqrt{n(n-1)}}.
\end{aligned}
\end{equation}

Multiply~\eqref{eq:3.2} by $u^{2q}f^{2+2q}$ and integrate:
\[
\int_M u^{2q+1}f^{2+2q}\Delta u\,dv_M
+X+\alpha J-nH^2K
\ge
\frac{2}{n}I.
\]

Integration by parts gives
\begin{align*}
\int_M u^{2q+1}f^{2+2q}\Delta u\,dv_M
&=
-\int_M \nabla(u^{2q+1}f^{2+2q})\cdot\nabla u\,dv_M\\
&=
-(2q+1)\int_M u^{2q}f^{2+2q}|\nabla u|^2\,dv_M \\
&\qquad
-(2+2q)\int_M u^{2q+1}f^{1+2q}\,\nabla f\cdot\nabla u\,dv_M.
\end{align*}

and therefore
\begin{equation}\label{eq:3.11}
\Bigl(\frac{2}{n}+2q+1\Bigr)I
\le
X+\alpha J-nH^2K
+(2+2q)\int_M u^{2q+1}f^{1+2q}|\nabla f|\,|\nabla u|\,dv_M.
\end{equation}

For any $\varepsilon_1>0$, Young's inequality gives
\begin{align*}
(2+2q)u^{2q+1}f^{1+2q}|\nabla f|\,|\nabla u|
&=
2(1+q)\bigl(u^{q}f^{1+q}|\nabla u|\bigr)\bigl(u^{q+1}f^{q}|\nabla f|\bigr)\\[2pt]
&\le
\varepsilon_1 u^{2q}f^{2+2q}|\nabla u|^2
+
\frac{(1+q)^2}{\varepsilon_1}u^{2+2q}f^{2q}|\nabla f|^2.
\end{align*}
Thus
\begin{equation}\label{eq:3.12}
\Bigl(\frac{2}{n}+2q+1-\varepsilon_1\Bigr)I
\le
X+\alpha J-nH^2K+\frac{(1+q)^2}{\varepsilon_1}F.
\end{equation}
For the cubic term, Young's inequality gives, for any $\varepsilon_3>0$,
\begin{align*}
\alpha|H|u^{3+2q}f^{2+2q}
&=
\Bigl(\sqrt{2\varepsilon_3}\,u^{2+q}f^{1+q}\Bigr)
\Bigl(\frac{\alpha}{\sqrt{2\varepsilon_3}}|H|u^{1+q}f^{1+q}\Bigr)\\[2pt]
&\le
\varepsilon_3 u^{4+2q}f^{2+2q}
+
\frac{\alpha^2}{4\varepsilon_3}H^2 u^{2+2q}f^{2+2q}.
\end{align*}
Since $\alpha^2=n^2(n-2)^2/(n(n-1))=n(n-2)^2/(n-1)$, we obtain
\begin{equation}\label{eq:3.13}
\alpha J\le \varepsilon_3 X+\frac{n(n-2)^2}{4(n-1)\varepsilon_3}H^2K.
\end{equation}

Testing~\eqref{eq:3.1} with $\phi=u^{1+q}f^{1+q}$ and using
$|A|^2=u^2+nH^2$, we have
\[
X+nH^2K\le \int_M|\nabla(u^{1+q}f^{1+q})|^2\,dv_M.
\]

Moreover,
\[
\nabla(u^{1+q}f^{1+q})=(1+q)u^{q}f^{1+q}\nabla u+(1+q)u^{1+q}f^{q}\nabla f.
\]
Thus
\[
\int_M|\nabla(u^{1+q}f^{1+q})|^2\,dv_M
=
(1+q)^2I+2(1+q)^2\int_M u^{2q+1}f^{1+2q}\nabla u\cdot\nabla f\,dv_M
+(1+q)^2F.
\]

The mixed term is bounded, for $\varepsilon_2>0$, by
\[
2(1+q)^2 u^{2q+1}f^{1+2q}|\nabla u|\,|\nabla f|
\le
\varepsilon_2(1+q)^2 I+\frac{(1+q)^2}{\varepsilon_2}F.
\]

Hence
\begin{equation}\label{eq:3.14}
X+nH^2K\le (1+q)^2(1+\varepsilon_2)I+(1+q)^2\Bigl(1+\frac{1}{\varepsilon_2}\Bigr)F.
\end{equation}

Insert~\eqref{eq:3.13} into~\eqref{eq:3.12}:
\begin{equation}\label{eq:3.15}
\Bigl(\frac{2}{n}+2q+1-\varepsilon_1\Bigr)I
\le
(1+\varepsilon_3)X
+
\Bigl(\frac{n(n-2)^2}{4(n-1)\varepsilon_3}-n\Bigr)H^2K
+
\frac{(1+q)^2}{\varepsilon_1}F.
\end{equation}

From~\eqref{eq:3.14} we isolate $X$:
\[
X\le (1+q)^2(1+\varepsilon_2)I+(1+q)^2\Bigl(1+\frac{1}{\varepsilon_2}\Bigr)F-nH^2K.
\]

Multiply by $(1+\varepsilon_3)$ and substitute into~\eqref{eq:3.15}:
\begin{align}
\Bigl(\frac{2}{n}+2q+1-\varepsilon_1\Bigr)I
&\le
(1+\varepsilon_3)(1+q)^2(1+\varepsilon_2)I
+(1+\varepsilon_3)(1+q)^2\Bigl(1+\frac{1}{\varepsilon_2}\Bigr)F \notag\\
&\quad
-(1+\varepsilon_3)nH^2K
+
\Bigl(\frac{n(n-2)^2}{4(n-1)\varepsilon_3}-n\Bigr)H^2K
+
\frac{(1+q)^2}{\varepsilon_1}F. \label{eq:3.16}
\end{align}

Bring the $I$-term from the right to the left:
\begin{align}
&\Bigl(\frac{2}{n}+2q+1-\varepsilon_1-(1+\varepsilon_3)(1+q)^2(1+\varepsilon_2)\Bigr)I \notag\\
&\qquad\le
\Bigl(\frac{(1+q)^2}{\varepsilon_1}+(1+\varepsilon_3)(1+q)^2\Bigl(1+\frac{1}{\varepsilon_2}\Bigr)\Bigr)F \notag\\
&\qquad\qquad+
\Bigl(\frac{n(n-2)^2}{4(n-1)\varepsilon_3}-2n-n\varepsilon_3\Bigr)H^2K. \label{eq:3.17}
\end{align}

Set
\[
\varepsilon_1=\frac{A}{4},\qquad
\varepsilon_3=\delta=\frac{A}{4(1+q)^2},\qquad
\varepsilon_2=\frac{A}{4(1+\delta)(1+q)^2}.
\]
The coefficient of $I$ in~\eqref{eq:3.17} is
\begin{align*}
\frac{2}{n}+2q+1-\varepsilon_1-(1+\varepsilon_3)(1+q)^2(1+\varepsilon_2)
&=
\frac{2}{n}+2q+1-(1+q)^2-\frac{A}{4}\\
&\qquad
-(1+q)^2\bigl(\varepsilon_2+\varepsilon_3+\varepsilon_2\varepsilon_3\bigr)\\
&=
A-\frac{A}{4}-(1+q)^2(\varepsilon_2+\delta+\varepsilon_2\delta)\\
&=
\frac{3A}{4}-(1+q)^2\varepsilon_2(1+\delta)\\
&\qquad -(1+q)^2\delta.
\end{align*}

By the definitions of $\varepsilon_2$ and $\delta$, this equals
\[
\frac{3A}{4}-\frac{A}{4}-\frac{A}{4}=\frac{A}{4}.
\]
Dividing~\eqref{eq:3.17} by $A/4$ gives
\begin{align*}
I&\le
\frac{4}{A}\Bigl(
\frac{4(1+q)^2}{A}
+(1+\delta)(1+q)^2\Bigl(1+\frac{4(1+\delta)(1+q)^2}{A}\Bigr)
\Bigr)F\\
&\qquad+
\frac{4}{A}
\Bigl(\frac{n(n-2)^2}{4(n-1)\delta}-2n-n\delta\Bigr)H^2K.
\end{align*}

The coefficient of $F$ simplifies to
\begin{align*}
\frac{4}{A}\Bigl(
\frac{4(1+q)^2}{A}+(1+\delta)(1+q)^2+\frac{4(1+\delta)^2(1+q)^4}{A}
\Bigr)
&=
\frac{16(1+q)^2}{A^2}
+\frac{4(1+\delta)(1+q)^2}{A}\\
&\qquad
+\frac{16(1+\delta)^2(1+q)^4}{A^2}.
\end{align*}
Since
\[
\frac{16(1+q)^2}{A^2}+\frac{16(1+\delta)^2(1+q)^4}{A^2}
=
\frac{16(1+q)^2\bigl[1+(1+\delta)^2(1+q)^2\bigr]}{A^2},
\]
this coefficient is $C_0$ from~\eqref{eq:3.8}.

The coefficient of $H^2K$ is
\begin{align*}
\frac{4}{A}\Bigl(\frac{n(n-2)^2}{4(n-1)\delta}-2n-n\delta\Bigr)
&=
\frac{4}{A}\Bigl(
\frac{n(n-2)^2}{4(n-1)}\cdot\frac{4(1+q)^2}{A}
-2n-\frac{nA}{4(1+q)^2}
\Bigr)\\[4pt]
&=
\frac{4n(n-2)^2(1+q)^2}{(n-1)A^2}
-\frac{8n}{A}
-\frac{n}{(1+q)^2}.
\end{align*}
If this coefficient is negative, discard it.  Its positive part is $B_0$
from~\eqref{eq:3.9}.
\end{proof}

\subsection*{H\"older closure with separated scales}

Apply the H\"older factorization separately to the cutoff and
mean-curvature terms.

\begin{proposition}\label{prop:3.3}
Let $M^n\subset \R^{n+1}$ be a strongly stable CMC hypersurface, $u=|\trlessA|$,
and $0<q<\sqrt{2/n}$.  Assume there exist constants $C_0>0$ and $B_0\ge0$ such that
estimate~\eqref{eq:3.7} holds for every $f\in C_c^\infty(M)$ with
$0\le f\le1$.  Then
\begin{equation}\label{eq:3.18}
\int_M u^{4+2q}f^{2+2q}\,dv_M
\le
\mathcal{C}_1\int_M u^2|\nabla f|^{2+2q}\,dv_M
+
\mathcal{C}_2 |H|^{2+2q}\int_M u^2f^{2+2q}\,dv_M,
\end{equation}
where
\begin{equation}\label{eq:3.19}
\mathcal{C}_1(q,C_0)=2^{q}\bigl(2(1+q)^2(C_0+1)\bigr)^{1+q},
\qquad
\mathcal{C}_2(q,B_0)=2^{q}\Bigl(\max\!\bigl\{0,\,2(1+q)^2B_0-n\bigr\}\Bigr)^{1+q}.
\end{equation}
\end{proposition}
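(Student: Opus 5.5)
Following the proof of Theorem~\ref{thm:2.2}, I first reduce to an inequality whose right side involves only \(F=\int_M u^{2+2q}f^{2q}|\nabla f|^2\) and \(K=\int_M u^{2+2q}f^{2+2q}\), and then treat these two terms by separate H\"older factorizations. In the notation \(X,I,F,K\) of the proof of Proposition~\ref{prop:3.2}, I test the strong stability inequality~\eqref{eq:3.1} with \(\phi=u^{1+q}f^{1+q}\) and use \(|A|^2=u^2+nH^2\). Expanding \(|\nabla\phi|^2\) with \((a+b)^2\le2a^2+2b^2\) gives
\[
X+nH^2K\le 2(1+q)^2 I+2(1+q)^2F.
\]
Inserting~\eqref{eq:3.7} for \(I\) yields
\[
X\le 2(1+q)^2(C_0+1)F+\bigl(2(1+q)^2B_0-n\bigr)H^2K.
\]
If the \(H^2K\) coefficient is negative I discard that term, so in all cases
\[
X\le aF+bH^2K,\qquad a=2(1+q)^2(C_0+1),\quad b=\max\{0,2(1+q)^2B_0-n\}.
\]

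Next I apply H\"older with exponents \(\frac{1+q}{q}\) and \(1+q\) to each term. The \(F\)-term factors exactly as in~\eqref{eq:2.20}, giving
\[
F\le X^{q/(1+q)}\Bigl(\int_M u^2|\nabla f|^{2+2q}\Bigr)^{1/(1+q)}.
\]
For the \(K\)-term I use the factorization
\[
u^{2+2q}f^{2+2q}=\bigl(u^{4+2q}f^{2+2q}\bigr)^{q/(1+q)}\bigl(u^2f^{2+2q}\bigr)^{1/(1+q)},
\]
which is checked by adding exponents: for \(u\), \(\frac{q(4+2q)+2}{1+q}=2+2q\); for \(f\), \(\frac{q(2+2q)+(2+2q)}{1+q}=2+2q\). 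Since \(H^2=(|H|^{2+2q})^{1/(1+q)}\), this gives
\[
H^2K\le X^{q/(1+q)}\Bigl(|H|^{2+2q}\int_M u^2f^{2+2q}\Bigr)^{1/(1+q)}.
\]

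Combining, \(X\le X^{q/(1+q)}\bigl(a P^{1/(1+q)}+bQ^{1/(1+q)}\bigr)\), where \(P=\int_M u^2|\nabla f|^{2+2q}\) and \(Q=|H|^{2+2q}\int_M u^2f^{2+2q}\). If \(X=0\) there is nothing to prove. Otherwise I divide by \(X^{q/(1+q)}\), which is finite since \(f\) has compact support and \(u\) is smooth, and raise to the power \(1+q\). The final step, and the one that fixes the \(2^q\) in~\eqref{eq:3.19}, is the convexity inequality \((x+y)^{1+q}\le 2^q(x^{1+q}+y^{1+q})\), i.e.\ Jensen for \(t\mapsto t^{1+q}\). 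This gives \(X\le 2^q a^{1+q}P+2^q b^{1+q}Q\), which is~\eqref{eq:3.18} with exactly \(\mathcal C_1,\mathcal C_2\) as stated.

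The only delicate point is the bookkeeping: the \(-nH^2K\) term coming from stability must be kept and merged with the \(B_0H^2K\) term before taking the positive part, because that is what produces \(\max\{0,2(1+q)^2B_0-n\}\) rather than \(2(1+q)^2B_0\). Regularity issues (non-integer powers, Lipschitz \(f^{1+q}\)) are handled by the conventions fixed in Section~\ref{sec:2}.
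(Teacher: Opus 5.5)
Your proposal is correct and follows the paper's proof essentially step for step. You test strong stability with $\phi=u^{1+q}f^{1+q}$ using $(a+b)^2\le 2a^2+2b^2$, insert \eqref{eq:3.7} while keeping the $-nH^2K$ term so the coefficient becomes $\max\{0,2(1+q)^2B_0-n\}$, apply the two H\"older factorizations to $F$ and $K$, divide by $X^{q/(1+q)}$, and use $(x+y)^{1+q}\le 2^q(x^{1+q}+y^{1+q})$ to arrive at exactly $\mathcal C_1$ and $\mathcal C_2$.
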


\begin{proof}
Test the strong stability inequality~\eqref{eq:3.1} with
$\phi=u^{1+q}f^{1+q}$.  As in the proof of Theorem~\ref{thm:2.2}, using
$(a+b)^2\le 2a^2+2b^2$,
\begin{equation}\label{eq:3.20}
X\le 2(1+q)^2 I+2(1+q)^2 F-nH^2K,
\end{equation}
where we continue to use the abbreviations~\eqref{eq:3.10}.

Substituting the gradient estimate~\eqref{eq:3.7} for
$I$,
\begin{equation}\label{eq:3.21}
X\le aF+bH^2K,
\end{equation}
with
\[
a=2(1+q)^2(C_0+1),\qquad
b=\max\!\bigl\{0,\,2(1+q)^2B_0-n\bigr\}.
\]
Set
\[
Y=\int_M u^2|\nabla f|^{2+2q}\,dv_M,\qquad
Z=\int_M u^2f^{2+2q}\,dv_M.
\]

As in~\eqref{eq:2.20}--\eqref{eq:2.21}, H\"older's inequality gives
\begin{equation}\label{eq:3.22}
F\le X^{\frac{q}{1+q}}Y^{\frac{1}{1+q}},\qquad
K\le X^{\frac{q}{1+q}}Z^{\frac{1}{1+q}}.
\end{equation}

Insert these bounds into~\eqref{eq:3.21}:
\[
X\le X^{\frac{q}{1+q}}\Bigl(aY^{\frac{1}{1+q}}+bH^2Z^{\frac{1}{1+q}}\Bigr).
\]

If $X>0$, division by $X^{q/(1+q)}$ gives
\[
X^{\frac{1}{1+q}}\le aY^{\frac{1}{1+q}}+bH^2Z^{\frac{1}{1+q}}.
\]
For $X=0$, the desired estimate is immediate.

Raising to $1+q$ and using
$(s+t)^{1+q}\le 2^{q}(s^{1+q}+t^{1+q})$, we obtain
\[
X\le 2^{q}a^{1+q}Y+2^{q}b^{1+q}|H|^{2+2q}Z.
\]

Substituting the definitions of $a,b$ and $X,Y,Z$ yields~\eqref{eq:3.18}
with the constants~\eqref{eq:3.19}.
\end{proof}

\subsection*{Proof of the scale-separated CMC estimate}

Define
\[
\widetilde{\mathcal{C}}_1=\mathcal{C}_1\bigl(q,C_0(n,q)\bigr),\qquad
\widetilde{\mathcal{C}}_2=\mathcal{C}_2\bigl(q,B_0(n,q)\bigr),
\]
where \(C_0,B_0\) are given by~\eqref{eq:3.8}--\eqref{eq:3.9} and
\(\mathcal C_1,\mathcal C_2\) by~\eqref{eq:3.19}.  These definitions preserve
the two coefficients in Proposition~\ref{prop:3.3}.

\begin{proof}[Proof of Theorem~\ref{thm:3.4}]
Choose the standard distance cutoff
\[
f(x)=
\begin{cases}
1, & d_M(x,p_0)\le \theta R,\\[0.4em]
\dfrac{R-d_M(x,p_0)}{(1-\theta)R}, & \theta R<d_M(x,p_0)<R,\\[0.6em]
0, & d_M(x,p_0)\ge R.
\end{cases}
\]
Then $0\le f\le1$, $f\equiv1$ on $B_{\theta R}(p_0)$, and
$|\nabla f|\le 1/((1-\theta)R)$ almost everywhere.
The density convention from Section~\ref{sec:2} makes this Lipschitz cutoff
admissible.

Propositions~\ref{prop:3.2} and~\ref{prop:3.3} give
\[
\int_M u^{4+2q}f^{2+2q}\,dv_M
\le
\widetilde{\mathcal{C}}_1
\int_M u^2|\nabla f|^{2+2q}\,dv_M
+
\widetilde{\mathcal{C}}_2 |H|^{2+2q}
\int_M u^2f^{2+2q}\,dv_M,
\]
with the constants stated above.  The cutoff bounds imply
\[
\int_M u^2|\nabla f|^{2+2q}\,dv_M
\le
\frac{1}{(1-\theta)^{2+2q}R^{2+2q}}
\int_{B_R(p_0)}u^2\,dv_M,
\]
and
\[
\int_M u^2 f^{2+2q}\,dv_M
\le
\int_{B_R(p_0)}u^2\,dv_M.
\]
Since $f=1$ on $B_{\theta R}(p_0)$, these estimates give~\eqref{eq:3.23}.
\end{proof}

\begin{proof}[Proof of Corollary~\ref{prop:3.5}]
By Theorem~\ref{thm:3.4},
\[
\int_{B_{\theta R}(p_0)} u^{4+2q}\,dv_M
\le
\left(
\frac{\widetilde{\mathcal C}_1}{(1-\theta)^{2+2q}R^{2+2q}}
+
\widetilde{\mathcal C}_2 |H|^{2+2q}
\right)
\int_{B_R(p_0)}u^2\,dv_M .
\]
Assume first that $\widetilde{\mathcal C}_2>0$.  The hypothesis gives
\[
\widetilde{\mathcal C}_2|H|^{2+2q}
\le
\lambda\frac{\widetilde{\mathcal C}_1}
{(1-\theta)^{2+2q}R^{2+2q}}.
\]
Substitution yields \eqref{eq:3.25}.  If $\widetilde{\mathcal C}_2=0$, the
$H$-dependent term is absent.
\end{proof}

\section{Moser Iteration and Pointwise Curvature Control}
\label{sec:4}

\subsection*{Differential and energy inequalities}

Set
\[
w=u^2=|\trlessA|^2 .
\]
The differential inequality below treats $w$ as a potential, so the Moser
iteration starts from an $L^s$ bound with $s>n/2$.  We assume
\begin{equation}\label{eq:4.1}
n>2,\qquad s:=2+q>\frac n2 .
\end{equation}
For $0<q<\sqrt{2/n}$, this includes $n=3,4$ and $n=5$ with $q>1/2$, but
not $n\ge6$.

We use the following consequence of the Michael--Simon Sobolev inequality
\cite{MS73}.  Set
\[
\chi=\frac n{n-2}.
\]
For every compactly supported \(\phi\in C_c^1(M)\),
\begin{equation}\label{eq:4.2}
\left(\int_M |\phi|^{2\chi}\,dv_M\right)^{1/\chi}
\le
S_n\int_M\bigl(|\nabla\phi|^2+H^2\phi^2\bigr)\,dv_M .
\end{equation}
Here $S_n$ depends only on $n$; the factor $n^2$ arising from the convention
$\trlessA=A-Hg$ is absorbed into $S_n$.

\begin{lemma}\label{lem:4.1}
Let \(M^n\subset\R^{n+1}\) be a CMC hypersurface, set
\(u=|\mathring A|\), \(w=u^2\), and
\[
\alpha=\frac{n(n-2)}{\sqrt{n(n-1)}}.
\]
Then \(w\) satisfies
\begin{equation}\label{eq:4.3}
\Delta w \ge -(3+\alpha^2)(w^2+H^2w),
\end{equation}
where \(\Delta\) is the Laplace--Beltrami operator on \(M\).
\end{lemma}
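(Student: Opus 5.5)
We derive the inequality directly from the Bochner-type identity \eqref{eq:3.4}, not from Lemma~\ref{lem:3.1}. The point is that \(w=u^2\) is smooth, so no regularization or Kato inequality is needed, and the gradient term can simply be discarded.

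By \eqref{eq:3.4},
\[
\Delta w=\Delta(u^2)=2|\nabla\trlessA|^2+2nH\operatorname{tr}(\trlessA^3)+2nH^2u^2-2u^4 .
\]
Drop the nonnegative terms \(2|\nabla\trlessA|^2\) and \(2nH^2u^2\). This gives
\[
\Delta w\ge -2w^2-2n|H|\,|\operatorname{tr}(\trlessA^3)| .
\]
Okumura's inequality \eqref{eq:3.6} then bounds the cubic term:
\[
2n|H|\,|\operatorname{tr}(\trlessA^3)|\le 2\alpha|H|u^3,
\qquad \alpha=\frac{n(n-2)}{\sqrt{n(n-1)}}.
\]

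Next we split the cubic term by Young's inequality:
\[
2\alpha|H|u^3=2\,(u^2)\,(\alpha|H|u)\le u^4+\alpha^2H^2u^2 .
\]
Combining the two displays yields
\[
\Delta w\ge -3w^2-\alpha^2H^2w .
\]
Since \(3\le 3+\alpha^2\) and \(\alpha^2\le 3+\alpha^2\), and both \(w^2\) and \(H^2w\) are nonnegative, this implies
\[
\Delta w\ge -(3+\alpha^2)(w^2+H^2w),
\]
which is \eqref{eq:4.3}.

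Nothing here is a real obstacle; the constant \(3+\alpha^2\) is deliberately generous. The only point to check is that \eqref{eq:3.4} is a pointwise identity valid everywhere, including where \(u=0\). It is, because it is stated for the smooth function \(u^2=|\trlessA|^2\). Therefore the regularization by \(u_\varepsilon\) used in Sections~\ref{sec:2}--\ref{sec:3} is not needed for Lemma~\ref{lem:4.1}. Strong stability is also not used: the lemma holds for any CMC hypersurface, exactly as stated.
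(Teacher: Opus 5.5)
Your proof is correct and follows essentially the paper's route: the identity \eqref{eq:3.4}, Okumura's bound, discarding $|\nabla\trlessA|^2$, and Young's inequality $2\alpha|H|w^{3/2}\le w^2+\alpha^2H^2w$. The only cosmetic difference is that you drop $2nH^2w$ immediately, whereas the paper carries it as $(2n-\alpha^2)H^2w$ before bounding it below by $-\alpha^2H^2w$. Your added remark that $w=|\trlessA|^2$ is smooth, so no regularization is needed, is correct.
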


\begin{proof}
From \eqref{eq:3.4},
\[
\frac12\Delta w
=
nH\operatorname{tr}(\trlessA^3)+nH^2w-w^2+|\nabla\trlessA|^2 .
\]
By Okumura's inequality~\cite{Oku74},
\[
nH\operatorname{tr}(\trlessA^3)
\ge
-\alpha |H|w^{3/2}.
\]
Dropping the nonnegative term \(|\nabla\trlessA|^2\), we get
\[
\Delta w
\ge
-2w^2-2\alpha |H|w^{3/2}+2nH^2w .
\]
Young's inequality gives
\[
2\alpha |H|w^{3/2}\le w^2+\alpha^2H^2w.
\]
Therefore
\[
\Delta w\ge -3w^2+(2n-\alpha^2)H^2w
\ge -(3+\alpha^2)(w^2+H^2w),
\]
which is~\eqref{eq:4.3}.
\end{proof}

\begin{lemma}\label{lem:4.2}
Let \(w\ge0\) satisfy \eqref{eq:4.3} in a geodesic ball
\(B\Subset M\).  Let \(\eta\in C_c^1(B)\), \(0\le\eta\le1\).  For every
\(p\ge2\),
\begin{equation}\label{eq:4.4}
\int_M |\nabla(\eta w^{p/2})|^2\,dv_M
\le
C_n p^2\int_M w^p|\nabla\eta|^2\,dv_M
+C_n p\int_M \eta^2w^{p+1}\,dv_M
+C_n pH^2\int_M \eta^2w^p\,dv_M .
\end{equation}
\end{lemma}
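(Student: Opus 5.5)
The plan is the standard Caccioppoli-type energy estimate. Test the differential inequality \eqref{eq:4.3} against \(\eta^2 w^{p-1}\) and integrate by parts; this test function is admissible under the regularization convention of Section~\ref{sec:2}, and \(w^{p-1}\) is \(C^1\) enough since \(p\ge2\). Write \(c=3+\alpha^2\). Multiplying \(\Delta w\ge -c(w^2+H^2w)\) by \(\eta^2w^{p-1}\ge0\) and integrating gives
\[
-\int_M \nabla(\eta^2w^{p-1})\cdot\nabla w\,dv_M
\ge
-c\int_M \eta^2 w^{p+1}\,dv_M - cH^2\int_M\eta^2w^p\,dv_M .
\]
Expanding the gradient, this becomes
\[
(p-1)\int_M \eta^2w^{p-2}|\nabla w|^2\,dv_M
\le
2\int_M \eta w^{p-1}|\nabla\eta|\,|\nabla w|\,dv_M
+c\int_M\eta^2w^{p+1}\,dv_M+cH^2\int_M\eta^2w^p\,dv_M .
\]

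The next step absorbs the mixed term with Young's inequality:
\[
2\eta w^{p-1}|\nabla\eta||\nabla w|\le \tfrac{p-1}{2}\eta^2w^{p-2}|\nabla w|^2+\tfrac{2}{p-1}w^p|\nabla\eta|^2 .
\]
This yields
\[
\int_M\eta^2w^{p-2}|\nabla w|^2\,dv_M\le \tfrac{4}{(p-1)^2}\int_M w^p|\nabla\eta|^2\,dv_M+\tfrac{2c}{p-1}\Bigl(\int_M\eta^2w^{p+1}\,dv_M+H^2\int_M\eta^2w^p\,dv_M\Bigr).
\]

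Then convert to the quantity on the left of \eqref{eq:4.4}. Since \(\nabla(\eta w^{p/2})=\frac p2\eta w^{p/2-1}\nabla w+w^{p/2}\nabla\eta\), we have
\[
|\nabla(\eta w^{p/2})|^2\le \tfrac{p^2}{2}\eta^2w^{p-2}|\nabla w|^2+2w^p|\nabla\eta|^2 .
\]
Inserting the previous bound produces the coefficient
\[
\tfrac{2p^2}{(p-1)^2}+2\le 10
\]
on \(\int_M w^p|\nabla\eta|^2\,dv_M\), using \(p\ge2\), so \(p/(p-1)\le2\). This is at most \(C_np^2\) because \(p^2\ge4\). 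The coefficient on the \(w^{p+1}\) and \(H^2w^p\) terms is
\[
\tfrac{cp^2}{p-1}\le 2cp ,
\]
again since \(p/(p-1)\le2\). Taking \(C_n=\max\{10,\,2(3+\alpha^2)\}\), which depends only on \(n\), gives \eqref{eq:4.4}.

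The only delicate point is justification rather than algebra. Integration by parts requires \(\eta\) compactly supported in \(B\), which the hypothesis provides. Where \(w\) vanishes, \(w^{p-2}|\nabla w|^2\) is handled by running the argument with \(w_\varepsilon=w+\varepsilon\) and letting \(\varepsilon\downarrow0\). The regularized function still satisfies
\[
\Delta w_\varepsilon=\Delta w\ge -c\bigl(w_\varepsilon^2+H^2w_\varepsilon\bigr),
\]
because the right-hand side is monotone in \(w\) for \(w\ge0\). Fatou's lemma and dominated convergence on the compact support then pass the limit. I expect no real obstacle here; the main care is tracking the \(p\)-dependence so that the gradient-of-cutoff term carries \(p^2\) and the potential terms only \(p\).
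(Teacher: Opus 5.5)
Your proof is correct and matches the paper's argument step for step: you test \eqref{eq:4.3} with $\eta^2 w^{p-1}$, absorb the mixed term by Young's inequality with weights $\frac{p-1}{2}$ and $\frac{2}{p-1}$, and then use $|\nabla(\eta w^{p/2})|^2\le \frac{p^2}{2}\eta^2w^{p-2}|\nabla w|^2+2w^p|\nabla\eta|^2$ together with $p/(p-1)\le 2$. The only difference is in the constant: the paper bounds $10\le 3p^2$ and takes $C_n=\max\{3,2(3+\alpha^2)\}$, whereas you take $C_n=\max\{10,2(3+\alpha^2)\}$, which is equally valid.
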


\begin{proof}
Set
\[
c_n=3+\alpha^2.
\]
Multiplying \(-\Delta w\le c_n(w^2+H^2w)\) by
\(\eta^2w^{p-1}\), integrating over \(M\), and integrating by parts, we
obtain
\begin{align*}
&(p-1)\int_M\eta^2w^{p-2}|\nabla w|^2\,dv_M
+2\int_M\eta w^{p-1}\langle\nabla\eta,\nabla w\rangle\,dv_M\\
&\qquad\le
c_n\int_M\eta^2\bigl(w^{p+1}+H^2w^p\bigr)\,dv_M.
\end{align*}
The mixed term is controlled by
\begin{align*}
2\int_M\eta w^{p-1}|\nabla\eta|\,|\nabla w|\,dv_M
\le{}&
\frac{p-1}{2}\int_M\eta^2w^{p-2}|\nabla w|^2\,dv_M\\
&+\frac{2}{p-1}\int_M w^p|\nabla\eta|^2\,dv_M.
\end{align*}
Hence
\begin{align*}
\int_M\eta^2w^{p-2}|\nabla w|^2\,dv_M
\le{}&
\frac{4}{(p-1)^2}\int_M w^p|\nabla\eta|^2\,dv_M\\
&+\frac{2c_n}{p-1}
\int_M\eta^2\bigl(w^{p+1}+H^2w^p\bigr)\,dv_M.
\end{align*}

Also,
\[
|\nabla(\eta w^{p/2})|^2
\le
2w^p|\nabla\eta|^2
+
\frac{p^2}{2}\eta^2w^{p-2}|\nabla w|^2.
\]
Substitution gives
\begin{align*}
\int_M|\nabla(\eta w^{p/2})|^2\,dv_M
\le{}&
\left(
2+\frac{2p^2}{(p-1)^2}
\right)
\int_M w^p|\nabla\eta|^2\,dv_M \\
&+
\frac{c_np^2}{p-1}
\int_M\eta^2w^{p+1}\,dv_M \\
&+
\frac{c_np^2H^2}{p-1}
\int_M\eta^2w^p\,dv_M .
\end{align*}
Since \(p\ge2\),
\[
\frac{p^2}{(p-1)^2}\le4,
\qquad
\frac{p^2}{p-1}\le2p,
\qquad
2+\frac{2p^2}{(p-1)^2}\le10\le3p^2.
\]
Thus, with
\[
C_n:=\max\{3,2c_n\},
\]
we obtain
\[
\int_M|\nabla(\eta w^{p/2})|^2\,dv_M
\le
C_n p^2\int_M w^p|\nabla\eta|^2\,dv_M
+
C_n p\int_M\eta^2w^{p+1}\,dv_M
+
C_n pH^2\int_M\eta^2w^p\,dv_M,
\]
which is \eqref{eq:4.4}.
\end{proof}

\subsection*{The supercritical iteration step}

Let \(s>n/2\) and define
\begin{equation}\label{eq:4.5}
\mu=\frac{2s}{2s-n},\qquad
\sigma=\frac{n}{2s}.
\end{equation}
The exponent \(\mu\) arises from the interpolation needed to handle the
potential term \(w\).

\begin{proposition}\label{prop:4.3}
Assume \(n>2\), \(s>n/2\), and the Sobolev inequality \eqref{eq:4.2}.  Let
\(B_{\rho'}(p_0)\Subset B_\rho(p_0)\Subset M\), and suppose that
\(w\ge0\) satisfies \eqref{eq:4.3} in \(B_\rho(p_0)\).  If \(p\ge2\), then
\begin{equation}\label{eq:4.6}
\|w\|_{L^{p\chi}(B_{\rho'}(p_0))}
\le
\left[
C\,p^m
\left(
\frac1{(\rho-\rho')^2}
+H^2
+\|w\|_{L^s(B_\rho(p_0))}^{\mu}
\right)
\right]^{1/p}
\|w\|_{L^p(B_\rho(p_0))},
\end{equation}
where \(m=\max\{2,\mu\}\) and \(C=C(n,s)\).
\end{proposition}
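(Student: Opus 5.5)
We will derive \eqref{eq:4.6} from the energy inequality of Lemma~\ref{lem:4.2}, the Sobolev inequality \eqref{eq:4.2}, and a Hölder--interpolation bound for the potential term. Fix a cutoff \(\eta\in C_c^1(B_\rho(p_0))\) with \(0\le\eta\le1\), \(\eta\equiv1\) on \(B_{\rho'}(p_0)\), and \(|\nabla\eta|\le 2/(\rho-\rho')\).

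\emph{Sobolev step.} Apply \eqref{eq:4.2} to \(\phi=\eta w^{p/2}\):
\[
\|w\|_{L^{p\chi}(B_{\rho'})}^{p}
\le\Bigl(\int_M(\eta w^{p/2})^{2\chi}\Bigr)^{1/\chi}
\le S_n\int_M|\nabla(\eta w^{p/2})|^2+S_nH^2\int_M\eta^2w^p .
\]
Lemma~\ref{lem:4.2} then bounds the right side by
\[
C\Bigl(\frac{p^2}{(\rho-\rho')^2}+pH^2\Bigr)\|w\|_{L^p(B_\rho)}^p
+Cp\int_M\eta^2w^{p+1}.
\]

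\emph{Potential term.} This is the main obstacle. Write \(\int\eta^2 w^{p+1}=\int w\,(\eta w^{p/2})^2\). Hölder with exponents \(s\) and \(s'=s/(s-1)\) gives
\[
\int\eta^2 w^{p+1}\le \|w\|_{L^s(B_\rho)}\,\|\eta w^{p/2}\|_{L^{2s'}}^2 .
\]
Since \(s>n/2\), we have \(1<s'<\chi\). Interpolating between \(L^2\) and \(L^{2\chi}\),
\[
\|\phi\|_{2s'}\le\|\phi\|_2^{1-t}\|\phi\|_{2\chi}^{t},\qquad t=\frac{n}{2s}=\sigma .
\]
Young's inequality with exponents \(1/t\) and \(1/(1-t)\) then gives
\[
Cp\|w\|_s\|\phi\|_{2s'}^2\le \tfrac12 S_n^{-1}\|\phi\|_{2\chi}^2+C'\bigl(p\|w\|_s\bigr)^{1/(1-t)}\|\phi\|_2^2 .
\]
Note that \(1/(1-\sigma)=2s/(2s-n)=\mu\), which is exactly why the exponent \(\mu\) appears.

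\emph{Absorption.} The term \(\tfrac12\|\phi\|_{2\chi}^2\) on the right is absorbed into the left side. To make this legitimate, \(\|\eta w^{p/2}\|_{L^{2\chi}}\) must be finite; this holds because \(w\) is smooth and \(\eta\) has compact support, or in the regularized setting of Section~\ref{sec:2}. Since \(\|\phi\|_2^2\le\|w\|_{L^p(B_\rho)}^p\), collecting terms gives
\[
\|w\|_{L^{p\chi}(B_{\rho'})}^p\le C\Bigl(\frac{p^2}{(\rho-\rho')^2}+pH^2+p^{\mu}\|w\|_{L^s(B_\rho)}^{\mu}\Bigr)\|w\|_{L^p(B_\rho)}^p .
\]
Each power of \(p\) is at most \(p^m\) with \(m=\max\{2,\mu\}\), using \(p\ge2\ge1\). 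Taking \(p\)-th roots yields \eqref{eq:4.6}.

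The constants \(S_n\), \(C_n\) from Lemma~\ref{lem:4.2}, and the Young constants depend only on \(n\) and \(s\), so \(C=C(n,s)\).
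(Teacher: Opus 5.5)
Your proposal is correct and follows the paper's proof essentially step for step: you apply the Sobolev inequality to $\eta w^{p/2}$ and use Lemma~\ref{lem:4.2}. You then handle the potential term by H\"older with exponents $s$ and $s/(s-1)$, interpolation between $L^2$ and $L^{2\chi}$ with weight $n/(2s)$, and Young with exponents $2s/n$ and $\mu$ to absorb half of $\|\eta w^{p/2}\|_{L^{2\chi}}^2$. The factor $S_n^{-1}$ in your Young step is superfluous once $S_n$ has been absorbed into $C$, but this is only bookkeeping.
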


\begin{proof}
Throughout the proof, $C=C(n,s)$ may change from line to line.

Choose \(\eta\in C_c^1(B_\rho(p_0))\) such that
\[
\eta\equiv1\quad\text{on }B_{\rho'}(p_0),\qquad
0\le\eta\le1,\qquad
|\nabla\eta|\le \frac{2}{\rho-\rho'}.
\]
Set
\[
\psi=\eta w^{p/2}.
\]
Applying \eqref{eq:4.2} to \(\psi\), we obtain
\[
\|\psi\|_{L^{2\chi}}^2
\le
S_n\int_M|\nabla\psi|^2\,dv_M
+
S_n H^2\int_M\psi^2\,dv_M.
\]
By Lemma~\ref{lem:4.2},
\begin{align*}
\int_M|\nabla\psi|^2\,dv_M
\le{}&
C_n p^2\int_M w^p|\nabla\eta|^2\,dv_M\\
&+
C_n p\int_M\eta^2w^{p+1}\,dv_M\\
&+
C_n pH^2\int_M\eta^2w^p\,dv_M.
\end{align*}
Since
\[
\psi^2=\eta^2w^p,
\qquad
\eta^2w^{p+1}=w\psi^2,
\]
and
\[
|\nabla\eta|^2\le\frac{4}{(\rho-\rho')^2},
\qquad
0\le\eta\le1,
\]
we obtain
\begin{align*}
\|\psi\|_{L^{2\chi}}^2
\le{}&
\frac{C p^2}{(\rho-\rho')^2}
\int_{B_\rho(p_0)}w^p\,dv_M\\
&+
C p\int_{B_\rho(p_0)}w\psi^2\,dv_M\\
&+
C pH^2\int_{B_\rho(p_0)}w^p\,dv_M
+
CH^2\int_{B_\rho(p_0)}w^p\,dv_M.
\end{align*}
Because \(p\ge2\), the last two terms are bounded by
\[
C p^2H^2\int_{B_\rho(p_0)}w^p\,dv_M.
\]
Thus
\begin{align}
\|\psi\|_{L^{2\chi}}^2
&\le
C p^2\left(\frac1{(\rho-\rho')^2}+H^2\right)
\int_{B_\rho(p_0)}w^p\,dv_M
+C p\int_{B_\rho(p_0)}w\psi^2\,dv_M .
\label{eq:4.7}
\end{align}

Let $\beta=n/(2s)\in(0,1)$.  Since
\[
\frac{1}{2s/(s-1)}=\frac{\beta}{2\chi}+\frac{1-\beta}{2},
\]
interpolation gives
\[
\|\psi\|_{L^{2s/(s-1)}}
\le
\|\psi\|_{L^{2\chi}}^{\beta}
\|\psi\|_{L^2}^{1-\beta}.
\]
Hence
\[
\|\psi\|_{L^{2s/(s-1)}}^2
\le
\|\psi\|_{L^{2\chi}}^{2\beta}
\|\psi\|_{L^2}^{2(1-\beta)}.
\]

H\"older's inequality with conjugate exponents \(s\) and \(s/(s-1)\)
now yields
\begin{align*}
\int_{B_\rho(p_0)}w\psi^2\,dv_M
&\le
\left(\int_{B_\rho(p_0)}w^s\,dv_M\right)^{1/s}
\left(\int_{B_\rho(p_0)}
|\psi|^{2s/(s-1)}\,dv_M\right)^{(s-1)/s}\\
&=
\|w\|_{L^s(B_\rho(p_0))}
\|\psi\|_{L^{2s/(s-1)}}^2\\
&\le
\|w\|_{L^s(B_\rho(p_0))}
\|\psi\|_{L^{2\chi}}^{2\beta}
\|\psi\|_{L^2}^{2(1-\beta)}.
\end{align*}

Young's inequality with exponents $1/\beta$ and $1/(1-\beta)=\mu$ then
implies
\begin{equation}\label{eq:4.8}
C p\int_{B_\rho(p_0)}w\psi^2\,dv_M
\le
\frac12\|\psi\|_{L^{2\chi}}^2
+C p^\mu\|w\|_{L^s(B_\rho(p_0))}^{\mu}\|\psi\|_{L^2}^2 .
\end{equation}

Insert \eqref{eq:4.8} into \eqref{eq:4.7} and absorb the first term on the
right to obtain
\begin{align*}
\|\psi\|_{L^{2\chi}}^2
\le{}&
C p^2\left(
\frac1{(\rho-\rho')^2}+H^2
\right)
\int_{B_\rho(p_0)}w^p\,dv_M\\
&+
C p^\mu\|w\|_{L^s(B_\rho(p_0))}^{\mu}
\|\psi\|_{L^2}^2.
\end{align*}
Since \(0\le\eta\le1\),
\[
\|\psi\|_{L^2}^2
=
\int_{B_\rho(p_0)}\eta^2w^p\,dv_M
\le
\int_{B_\rho(p_0)}w^p\,dv_M.
\]
Therefore
\begin{align*}
\|\psi\|_{L^{2\chi}}^2
\le
C\left[
p^2\left(
\frac1{(\rho-\rho')^2}+H^2
\right)
+
p^\mu\|w\|_{L^s(B_\rho(p_0))}^{\mu}
\right]
\int_{B_\rho(p_0)}w^p\,dv_M.
\end{align*}

Since $m=\max\{2,\mu\}$ and $p\ge2$,
\[
\|\psi\|_{L^{2\chi}}^2
\le
C p^m
\left(
\frac1{(\rho-\rho')^2}
+H^2
+\|w\|_{L^s(B_\rho(p_0))}^{\mu}
\right)
\int_{B_\rho(p_0)}w^p\,dv_M.
\]

Because \(\eta\equiv1\) on \(B_{\rho'}(p_0)\),
\[
\psi=w^{p/2}
\qquad\text{on }B_{\rho'}(p_0).
\]
\begin{align*}
\|w\|_{L^{p\chi}(B_{\rho'}(p_0))}^{p}
&=
\left(
\int_{B_{\rho'}(p_0)}w^{p\chi}\,dv_M
\right)^{1/\chi}\\
&\le
\left(
\int_M|\psi|^{2\chi}\,dv_M
\right)^{1/\chi}\\
&=
\|\psi\|_{L^{2\chi}}^2.
\end{align*}
Also,
\[
\int_{B_\rho(p_0)}w^p\,dv_M
=
\|w\|_{L^p(B_\rho(p_0))}^{p}.
\]
Hence
\[
\|w\|_{L^{p\chi}(B_{\rho'}(p_0))}^{p}
\le
C p^m
\left(
\frac1{(\rho-\rho')^2}
+H^2
+\|w\|_{L^s(B_\rho(p_0))}^{\mu}
\right)
\|w\|_{L^p(B_\rho(p_0))}^{p}.
\]
Taking the \(p\)-th root gives \eqref{eq:4.6}.
\end{proof}

\subsection*{The Moser scheme}

Apply the Moser scheme~\cite{Mos60} to~\eqref{eq:4.6}.  Fix
\(0<\theta<\tau<1\) and set
\[
\rho_j=\theta R+(\tau-\theta)R\,2^{-j},\qquad
p_j=s\chi^j,\qquad j=0,1,2,\dots .
\]
Then \(\rho_0=\tau R\), \(\rho_j\downarrow\theta R\), and
\[
\rho_j-\rho_{j+1}=(\tau-\theta)R\,2^{-j-1}.
\]
Put
\[
A_{\tau,R}=R^{2-n/s}\|w\|_{L^s(B_{\tau R}(p_0))},\qquad
T_R=1+H^2R^2 .
\]
Since
\[
\|w\|_{L^s(B_{\rho_j}(p_0))}^{\mu}
\le
\frac{A_{\tau,R}^{\mu}}{R^2},
\]
the step \eqref{eq:4.6} gives, with
\[
N_j=\|w\|_{L^{p_j}(B_{\rho_j}(p_0))},
\]
the estimate
\[
N_{j+1}
\le
\left[
C p_j^m4^{j+1}
\frac{T_R+A_{\tau,R}^{\mu}}{R^2}
\right]^{1/p_j}
N_j .
\]
Multiplying from \(j=0\) to \(k-1\) yields
\[
N_k
\le
\left[
\frac{C}{R^2}\bigl(T_R+A_{\tau,R}^{\mu}\bigr)
\right]^{\sum_{j=0}^{k-1}1/p_j}
\prod_{j=0}^{k-1}(p_j^m4^{j+1})^{1/p_j}
N_0 .
\]
Since $p_j=s\chi^j$ and $\chi>1$,
\[
\sum_{j=0}^{\infty}\frac1{p_j}=\frac{n}{2s},
\qquad
\sum_{j=0}^{\infty}\frac{j+1}{p_j}<\infty.
\]
Thus the product is bounded by a constant depending only on
$n,s,\theta,\tau$.  Letting $k\to\infty$ gives
\begin{equation}\label{eq:4.9}
\|w\|_{L^\infty(B_{\theta R}(p_0))}
\le
\frac{C}{R^2}
A_{\tau,R}\bigl(T_R+A_{\tau,R}^{\mu}\bigr)^\sigma,
\end{equation}
where \(C=C(n,s,\theta,\tau)\).

Since
\[
w^s=(u^2)^{2+q}=u^{4+2q},
\]
Theorem~\ref{thm:3.4} supplies this initial $L^s$ bound.

\subsection*{Proof of the pointwise estimates}

\begin{proof}[Proof of Theorem~\ref{thm:4.4}]
Set
\[
\tau=\frac{1+\theta}{2}.
\]
Applying Theorem~\ref{thm:3.4} to
\(B_{\tau R}(p_0)\subset B_R(p_0)\), and using \(s=2+q\), gives
\[
\int_{B_{\tau R}(p_0)}w^s\,dv_M
\le
\left(
\frac{\widetilde{\mathcal C}_1}
     {(1-\tau)^{2s-2}R^{2s-2}}
+
\widetilde{\mathcal C}_2|H|^{2s-2}
\right)
\int_{B_R(p_0)}w\,dv_M .
\]
Multiplying by \(R^{2s-n}\), we obtain
\begin{align*}
A_{\tau,R}^s
&=
R^{2s-n}
\int_{B_{\tau R}(p_0)}w^s\,dv_M\\
&\le
C(n,q,\theta)
\left(
1+|H|^{2s-2}R^{2s-2}
\right)
\frac{1}{R^{n-2}}
\int_{B_R(p_0)}w\,dv_M\\
&\le
C(n,q,\theta)
T_R^{s-1}E_R\\
&=
C(n,q,\theta)\mathcal E_R.
\end{align*}
Here we used
\[
1+\left(H^2R^2\right)^{s-1}
\le
\left(1+H^2R^2\right)^{s-1}
=
T_R^{s-1}.
\]
Taking the $s$-th root gives
\begin{equation}\label{eq:4.11}
A_{\tau,R}
\le
C(n,q,\theta)\mathcal E_R^{1/s}.
\end{equation}

Substitute \eqref{eq:4.11} into \eqref{eq:4.9} to obtain
\[
\sup_{B_{\theta R}(p_0)}u^2
\le
\frac{C(n,q,\theta)}{R^2}
\mathcal E_R^{1/s}
\left(
T_R+\mathcal E_R^{\mu/s}
\right)^\sigma,
\]
with fixed powers of \(C(n,q,\theta)\) absorbed into the constant.
This proves \eqref{eq:4.10}.

Since \(T_R\ge1\),
\[
T_R+\mathcal E_R^{\mu/s}
\le
T_R\left(1+\mathcal E_R^{\mu/s}\right).
\]
Since \(0<\sigma<1\), we also have \(T_R^\sigma\le T_R\).  Hence
\begin{align*}
\frac{\left(
T_R+\mathcal E_R^{\mu/s}
\right)^\sigma}{R^2}
&\le
\frac{T_R^\sigma}{R^2}
\left(
1+\mathcal E_R^{\mu/s}
\right)^\sigma\\
&\le
\frac{T_R}{R^2}
\left(
1+\mathcal E_R^{\mu/s}
\right)^\sigma\\
&=
\left(\frac1{R^2}+H^2\right)
\left(
1+\mathcal E_R^{\mu/s}
\right)^\sigma.
\end{align*}
This proves the second estimate.
\end{proof}

\begin{proof}[Proof of Corollary~\ref{cor:4.5}]
Set
\[
s=2+q,\qquad
\mu=\frac{2s}{2s-n},\qquad
\sigma=\frac n{2s}.
\]
By Theorem~\ref{thm:4.4},
\[
\sup_{B_{\theta R}(p_0)}u^2
\le
\frac{C}{R^2}
A_{\tau,R}
\left(T_R+A_{\tau,R}^{\mu}\right)^\sigma,
\]
where
\[
A_{\tau,R}\le C\mathcal E_R^{1/s},
\qquad
T_R=1+H^2R^2,
\]
and \(C=C(n,q,\theta)\).  If \(\mathcal E_R\le1\), then
\[
A_{\tau,R}\le C\mathcal E_R^{1/s}
\]
and, since \(T_R\ge1\),
\[
\left(T_R+A_{\tau,R}^{\mu}\right)^\sigma
\le
C T_R^\sigma.
\]
Since \(0<\sigma<1\),
\[
\begin{aligned}
\sup_{B_{\theta R}(p_0)}u^2
&\le
\frac{C T_R^\sigma}{R^2}\mathcal E_R^{1/s}\\
&\le
\frac{C T_R}{R^2}\mathcal E_R^{1/s}\\
&=
C\left(\frac{1}{R^2}+H^2\right)\mathcal E_R^{1/s}.
\end{aligned}
\]
Given \(\delta>0\), choose
\[
\varepsilon
\le
\min\left\{1,\left(\frac{\delta}{C}\right)^s\right\}.
\]
Then \(\mathcal E_R\le\varepsilon\) implies~\eqref{eq:4.13}.
\end{proof}

\begin{remark}
The quantity
\[
\mathcal E_R
=
\frac{\left(1+H^2R^2\right)^{1+q}}{R^{n-2}}
\int_{B_R(p_0)}u^2\,dv_M
\]
is invariant under ambient dilations.  Theorem~\ref{thm:4.4} gives the
quantitative contribution of \(\mathcal E_R\) to the pointwise curvature
bound.  Corollary~\ref{cor:4.5} states the small-energy form: for each
\(\delta>0\), the energy can be chosen small enough that \(u^2\) is
\(\delta\)-small relative to \(\frac{1}{R^2}+H^2\).  If \(|H|R\) is bounded,
\(\mathcal E_R\) is comparable to the normalized \(L^2\)-curvature
\[
\frac{1}{R^{n-2}}\int_{B_R(p_0)}u^2\,dv_M.
\]
\end{remark}

Since
\[
|A|^2
=
|\mathring A|^2+nH^2
=
u^2+nH^2,
\]
Corollary~\ref{cor:4.5} also gives
\[
\begin{aligned}
\sup_{B_{\theta R}(p_0)}|A|^2
&\le
\sup_{B_{\theta R}(p_0)}u^2+nH^2\\
&\le
C(n,q,\theta)
\left(
\frac1{R^2}+H^2
\right),
\end{aligned}
\]
after enlarging the constant.

\bibliographystyle{unsrt}
\bibliography{holder_closure}

\end{document}